\theoremstyle{plain}
\newtheorem{thm}{Theorem}[section]
\newtheorem{lem}[thm]{Lemma}
\newtheorem{prop}[thm]{Proposition}
\newtheorem{cor}[thm]{Corollary}
\renewcommand{\subset}{\subseteq}
\renewcommand{\theta}{\vartheta}
\renewcommand{\phi}{\varphi}
\theoremstyle{definition}
\theoremstyle{remark}
\newtheorem{rmk}[thm]{Remark}
\newcommand{\eps}{\varepsilon}
\newcommand{\interior}{\operatorname{int}}
\newcommand{\DD}{\mathbb{D}}
\newcommand{\dist}{\operatorname{dist}}
\numberwithin{equation}{section}
\newcommand \C{\mathbb{C}}
\newcommand \Ch{\widehat{\mathbb{C}}}
\newcommand \N{\mathbb{N}}
\newcommand \D{\mathbb{D}}
\newcommand*{\defeq}{\mathrel{\vcenter{\baselineskip0.5ex \lineskiplimit0pt
                     \hbox{\scriptsize.}\hbox{\scriptsize.}}}%
                     =}
\renewcommand{\geqslant}{\geq}
\newcommand{\Fill}{\operatorname{fill}}
\begin{document}

\title{Bounded Fatou and Julia components of meromorphic functions}
\author[D. Mart\'i-Pete \and L. Rempe \and J. Waterman]{David Mart\'i-Pete \and Lasse Rempe \and James Waterman}

\address{Department of Mathematical Sciences\\ University of Liverpool\\ Liverpool L69 7ZL\\ United Kingdom\textsc{\newline \indent \href{https://orcid.org/0000-0002-0541-8364}{\includegraphics[width=1em,height=1em]{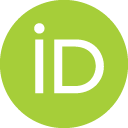} {\normalfont https://orcid.org/0000-0002-0541-8364}}}} 
\email{david.marti-pete@liverpool.ac.uk}

\address{Department of Mathematical Sciences\\ University of Liverpool\\ Liverpool L69 7ZL\\ United Kingdom\textsc{\newline \indent \href{https://orcid.org/0000-0001-8032-8580}{\includegraphics[width=1em,height=1em]{orcid2.png} {\normalfont https://orcid.org/0000-0001-8032-8580}}}} 
\email{lrempe@liverpool.ac.uk}

\address{Institute for Mathematical Sciences\\ Stony Brook University\\ Stony Brook NY 11794--3660\\ USA\textsc{\newline \indent \href{https://orcid.org/0000-0001-7266-0292}{\includegraphics[width=1em,height=1em]{orcid2.png} {\normalfont https://orcid.org/0000-0001-7266-0292}}}}
\email{james.waterman@stonybrook.edu}

\subjclass[2020]{Primary 37F10; Secondary 30D05, 37B45, 54F15.}

\date{\today}


\begin{abstract}
We completely characterise the bounded sets that arise as components of the Fatou and Julia sets of meromorphic functions. On the one hand, we prove that a bounded domain is a Fatou component of some meromorphic function if and only if it is regular. On the other hand, we prove that a planar continuum is a Julia component of some meromorphic function if and only if it has empty interior. We do so by constructing meromorphic functions with wandering continua using approximation theory. 
\end{abstract}

\maketitle

\section{Introduction}

Let $f\colon\C\to \Ch$ be a meromorphic function. 
 The \emph{Fatou set} of $f$, $F(f)$, is the set of points of the Riemann sphere whose orbit under $f$ remains stable
  under small perturbations.
 More formally, $F(f)$ is the largest open set on which the iterates of~$f$ are defined and form a normal family. Its complement, $J(f) \defeq \C\setminus F(f)$, is called the \emph{Julia set} of~$f$. 
 We refer to the connected components of $F(f)$ and $J(f)$ as \textit{Fatou components} and \textit{Julia components}, respectively. Note that throughout this paper when we say component, we mean connected component. For an introduction
 to the iteration of meromorphic functions, see \cite{bergweiler93}.
 
The geometric structure of Julia sets is normally very complicated, and the same
  tends to be true for the boundaries of Fatou components. Therefore it seems
  surprising that we are able to give the following 
  complete characterisation of which bounded sets can arise as 
   Fatou and Julia components of meromorphic functions. (Compare
   Figure~\ref{fig:continuum}.) Recall that a domain 
     $U\subseteq \C$ is called \textit{regular} if $\textup{int}(\overline{U})=U$.
 
 \begin{thm}[Bounded components of the Fatou set]
Let $U\subset\C$ be a bounded domain, that is, $U$ is non-empty, open, connected and bounded. The following are equivalent:
 \begin{enumerate}[(a)]
    \item $U$ is regular;\label{item:Uregular} 
    \item there is a meromorphic function $f$ such that $U$ is a Fatou component of $f$.\label{item:Fatoucomponent}
  \end{enumerate}\label{thm:comp-fatou}
\end{thm}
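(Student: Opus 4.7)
The plan is to prove the two implications separately. Implication (b) $\Rightarrow$ (a) is a known dynamical fact: Fatou components are regular. The short argument is that if $z_0 \in \operatorname{int}(\overline{U}) \setminus U$ then a disc about $z_0$ lies in $\overline{U}$, so its Fatou part $D \cap F(f) = D \cap U$ is open and dense in $D$ while $z_0 \in D \cap \partial U \subset J(f)$; the blowing-up property of $J(f)$ at $z_0$ combined with the normality of $\{f^n\}$ on the dense open set $D \cap U$ yields a contradiction. I would treat this as a short preliminary lemma.

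The substantive direction is (a) $\Rightarrow$ (b). I would realise $U$ as a wandering Fatou component of a meromorphic function. Pick real translations $0 = w_0 < w_1 < w_2 < \cdots$ with $w_n \to +\infty$ growing fast enough that the translates $U_n := U + w_n$ are pairwise disjoint and uniformly separated, and fix a basepoint $c \in U$. Set $E := \bigcup_{n \ge 0} \overline{U_n}$, and define a target $g : E \to \mathbb{C}$, holomorphic on $\operatorname{int}(E)$, by $g(z) := c + w_{n+1}$ for $z \in \overline{U_n}$ (so $g$ crushes $\overline{U_n}$ to the interior point $c + w_{n+1}$ of $U_{n+1}$). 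Using a meromorphic approximation theorem (a suitable variant of Nersesyan / Gauthier--Hengartner tangential approximation, allowing poles in the bounded components of $\mathbb{C} \setminus E$), I would produce a meromorphic $f : \mathbb{C} \to \widehat{\mathbb{C}}$ with $|f(z) - g(z)| < \varepsilon_n$ on $\overline{U_n}$, where $\varepsilon_n$ is chosen so small that $f(\overline{U_n}) \subset D(c + w_{n+1}, \varepsilon_n) \Subset U_{n+1}$. Then $(f^n)|_U$ escapes to infinity through the chain $U_0 \to U_1 \to U_2 \to \cdots$, so $U$ is contained in some Fatou component $V$ of $f$.

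Showing $V = U$, i.e.\ $\partial U \subset J(f)$, is the main obstacle, and is where the regularity of $U$ is indispensable. If $z_0 \in \partial U$, then $U = \operatorname{int}(\overline{U})$ guarantees that every neighbourhood of $z_0$ meets $\mathbb{C} \setminus \overline{U}$, which lies in the ``free region'' of the approximation (outside $E$ once the $U_n$ are placed). I would strengthen the approximation input so as to plant non-normality-witnessing dynamics there: for instance, prescribe that $f$ has a dense sequence of poles accumulating on $\partial U$ from outside $\overline{U}$, giving $\partial U \subset \overline{f^{-1}(\infty)} \subset J(f)$; alternatively, prescribe a dense sequence of repelling periodic points on the outer side of $\partial U$. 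Once $\partial U \subset J(f)$ is established, the Fatou component $V$ containing $U$ cannot cross $\partial U$, and since $V$ is open, connected and contains the dense open subset $U$ of $\overline{U}$, one concludes $V = U$.

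The delicate step---and the technical heart of the argument---is the quantitative bookkeeping: balancing the rapid decay of the tolerances $\varepsilon_n$ against the prescribed pole and periodic-point data, uniformly with respect to the possibly wild boundary of a general regular domain, so that a single globally meromorphic $f$ simultaneously realises the wandering chain and forces $\partial U \subset J(f)$. The fact that $\partial U$ can be extremely pathological (e.g.\ an indecomposable continuum) is the reason approximation theory, and not an explicit formula, is the natural tool.
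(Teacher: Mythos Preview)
Your sketch for (b) $\Rightarrow$ (a) is fine in spirit; the paper proves this as a separate lemma, using the blowing-up property of backward orbits of a second Fatou component to show $\partial U \subset \overline{\C\setminus\overline{U}}$, hence $\partial U = \partial\overline{U}$.

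For (a) $\Rightarrow$ (b) your overall strategy (approximation theory producing a wandering chain of translates of $U$) is the same as the paper's, but the mechanism you propose for forcing $\partial U \subset J(f)$ does not work. A function meromorphic on all of $\C$ has a discrete set of poles, so the poles cannot accumulate on the bounded set $\partial U$; the suggestion ``prescribe a dense sequence of poles accumulating on $\partial U$ from outside'' is therefore impossible, not merely delicate. The alternative of prescribing repelling periodic points runs into a structural problem with your one-shot approximation scheme: to plant infinitely many such points near $\partial U$ you would need to add infinitely many small discs $B_k \subset \C\setminus\overline{U}$ to your model set $E$, with centres accumulating on $\partial U \subset \overline{U_0}\subset E$. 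On $\overline{U_0}$ your model map $g$ is the constant $c+w_1$, while on $B_k$ it must be a repelling germ fixing $c_k$; as $c_k\to p\in\partial U$ these values are incompatible, so $g$ is not continuous on $E$ and no Nersesyan-type theorem applies.

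The paper resolves this with a genuinely different device. Instead of a single approximation on $\overline{U}$ and its translates, it fixes a nested sequence of compact neighbourhoods $K_j \searrow K=\overline{U}$ and builds $f$ as a locally uniform limit of rational maps $f_j$, obtained by iterated Runge approximation on compact sets. At stage $j$ one arranges simultaneously that $f^j$ carries $K_j$ injectively into a disc $D_j$ far to the right, and that $f^{j+1}(\partial K_j)$ lands in a fixed disc $D_{-1}$ that lies in an attracting basin. Thus every point of $\partial K$ is approximated by points of some $\partial K_j$ whose orbits are attracted, while points of $K$ itself escape to $\infty$; this contrast yields non-normality on $\partial K$, hence $\partial K\subset J(f)$. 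The Julia-set witnesses are not poles or periodic points planted in advance, but the boundaries $\partial K_j$ processed one at a time in the induction; they accumulate on $\partial U$ only in the limit, so no single model function has to carry incompatible data.
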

   
\begin{thm}[Bounded components of the Julia set]
Let $X\subset\C$ be a continuum, that is, $X$ is non-empty, closed, connected and bounded. The following are equivalent:
 \begin{enumerate}[(a)]
    \item $X$ has empty interior; 
    \item there is a meromorphic function $f$ such that $X$ is a Julia component of $f$.
  \end{enumerate}\label{thm:comp-julia}
\end{thm}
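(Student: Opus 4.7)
For $(b) \Rightarrow (a)$, I appeal to the standard dichotomy for meromorphic functions: either $J(f)$ has empty interior, or $J(f) = \C$. If $X$ is a bounded component of $J(f)$, then $X \neq \C$, hence $J(f) \neq \C$, and so $X$ has empty interior.

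The hard direction $(a) \Rightarrow (b)$ requires producing, for each continuum $X \subset \C$ with empty interior, a meromorphic function $f$ having $X$ as a Julia component. My plan is to realise $X$ as a \emph{wandering continuum}: an orbit of pairwise disjoint continua $X_0 = X, X_1, X_2, \ldots$ escaping to infinity with $f(X_n) = X_{n+1}$. Fix a rapidly growing sequence $a_0 = 0, a_1, a_2, \ldots$ with $|a_n|\to\infty$, set $X_n := X + a_n$, and apply a meromorphic Runge/Nersesyan/Gauthier-type approximation theorem to build a meromorphic $f\colon \C \to \Ch$ that, on a small neighbourhood $K_n$ of each $X_n$, is close to the translation $z \mapsto z + (a_{n+1}-a_n)$. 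Poles of $f$ are placed outside $\bigcup_n K_n$, both to ensure genuine meromorphy and, via accumulation of their preimages on $X$, to force $X \subset J(f)$.

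The subtle step is to verify that $X$ is a \emph{maximal} connected subset of $J(f)$: that is, some neighbourhood $W$ of $X$ satisfies $W \cap J(f) = X$. I would arrange $W \setminus X \subset F(f)$ by making the approximation on each $K_n \setminus X_n$ tight enough that the iterates leave $K_n$ normally, escaping to infinity in a uniform manner along the wandering orbit. Together with the inclusion $X \subset J(f)$, this identifies $X$ exactly as the component of $J(f)$ containing it.

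The main obstacle is controlling this ``off-$X$'' dynamics, because $X$ can be topologically complicated (think of a pseudo-arc, or a continuum whose complement has infinitely many components), and so a geometric $\eps$-neighbourhood of $X$ is too crude: its shape is dictated by that of $X$. I expect the resolution to invoke the ideas behind Theorem~\ref{thm:comp-fatou}, constructing the sets $K_n \setminus X_n$ so that each consists of (or is absorbed into) prescribed bounded regular Fatou components of $f$, thereby engineering normality on $W \setminus X$ by design rather than inferring it. This interplay with the Fatou-component construction is what I expect to make $(a) \Rightarrow (b)$ genuinely delicate, and it is where the bulk of the technical work should concentrate.
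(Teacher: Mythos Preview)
Your $(b)\Rightarrow(a)$ is fine. For $(a)\Rightarrow(b)$, the high-level plan (wandering continuum via Runge approximation) matches the paper's, and you correctly flag that isolating $X$ as a \emph{full} Julia component is the crux. But your proposed mechanism has an internal tension you have not resolved: you want $W\setminus X\subset F(f)$ because points there ``escape to infinity in a uniform manner along the wandering orbit'', yet if $f$ is close to a translation on a \emph{fixed} neighbourhood $K_0\supset X$, then all of $K_0$---including $X$---escapes uniformly, and Montel's theorem gives $K_0\subset F(f)$. Your accumulation-of-prepoles argument for $X\subset J(f)$ then cannot fire, since no prepoles lie in $K_0$. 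In short, uniform escape on a neighbourhood and $X\subset J(f)$ are incompatible, and your sketch does not say how to break the symmetry between $X$ and nearby points.

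The paper's key device, which you circle around in your last paragraph but do not land on, is twofold. First, one works not with a fixed neighbourhood but with a \emph{nested} sequence of compacta $K_j\searrow X$, building $f$ as a limit of rational maps so that $f^j$ is injective on $K_j$ with $f^j(K_j)$ contained in a far disc $D_j$. Second---and this is the missing idea---one also arranges $f^{j+1}(\partial K_j)\subset D_{-1}$, an auxiliary \emph{attracting basin} created at the start of the construction. Thus points of $X$ escape to $\infty$, while each $\partial K_j$ (whose union accumulates on every point of $X$) has bounded orbit; this non-normality forces $X\subset J(f)$, and since each $\partial K_j$ lies in $F(f)$ it simultaneously separates $X$ from every other point of $J(f)$. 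The Runge step must therefore be applied inductively, with poles deliberately introduced \emph{between} $f_j^j(K_{j+1})$ and $f_j^j(\partial K_j)$ at each stage, rather than in one shot on $\bigcup_n K_n$. (Incidentally, Theorem~\ref{thm:comp-fatou} is not an input here; both it and Theorem~\ref{thm:comp-julia} are immediate corollaries of the single construction in Theorem~\ref{thm:main}, specialised to $K=\overline{U}$ and $K=X$ respectively.)
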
   

 The domain $U$ in Theorem~\ref{thm:comp-fatou} will be a \emph{wandering domain} of the function $f$;
   that is, a Fatou component $U$ such that $f^m(U)\cap f^n(U)=\emptyset$ for $m\neq n$. This is essential, since the result does not hold for 
   \textit{eventually periodic} (i.e. non-wandering) Fatou components:  
     even some Jordan domains cannot arise as eventually periodic Fatou components of meromorphic functions (see Corollary~\ref{cor:not-realised}). Stronger restrictions are known for the geometry of certain types of periodic Fatou components \cite{azarina89,zdunik91,roesch-yin08};
        for example, any bounded immediate attracting basin of a polynomial is bounded either by a circle or by a Jordan curve of Hausdorff dimension greater than one. 
        
  Polynomials and rational maps do not have wandering domains by a famous theorem of Sullivan~\cite{sullivan85};
   hence the function $f$ in Theorem~\ref{thm:comp-fatou} must be transcendental. 
    Wandering domains of transcendental entire and meromorphic functions 
    have been the subject of intense study in recent years; see, e.g., 
  \cite{bishop15,benini-rippon-stallard16,bishop18,martipete-shishikura20,benini-evdoridou-fagella-rippon-stallard}. The first result about the realisation of 
  specific domains as wandering domains was proved by Boc Thaler~\cite{bocthaler21}.
   He showed that every bounded regular domain whose closure has connected complement is a 
   wandering domain of some transcendental entire function. As discussed below, Boc Thaler's result was extended to a larger 
   class of bounded domains in \cite{martipete-rempe-waterman22}, 
   whose closure may disconnect the plane. This class includes
   ``Lakes of Wada'' examples, where two or more such domains share the same boundary.

\begin{figure}
\begin{center}
\hspace{\fill}
\includegraphics[width=0.27\linewidth]{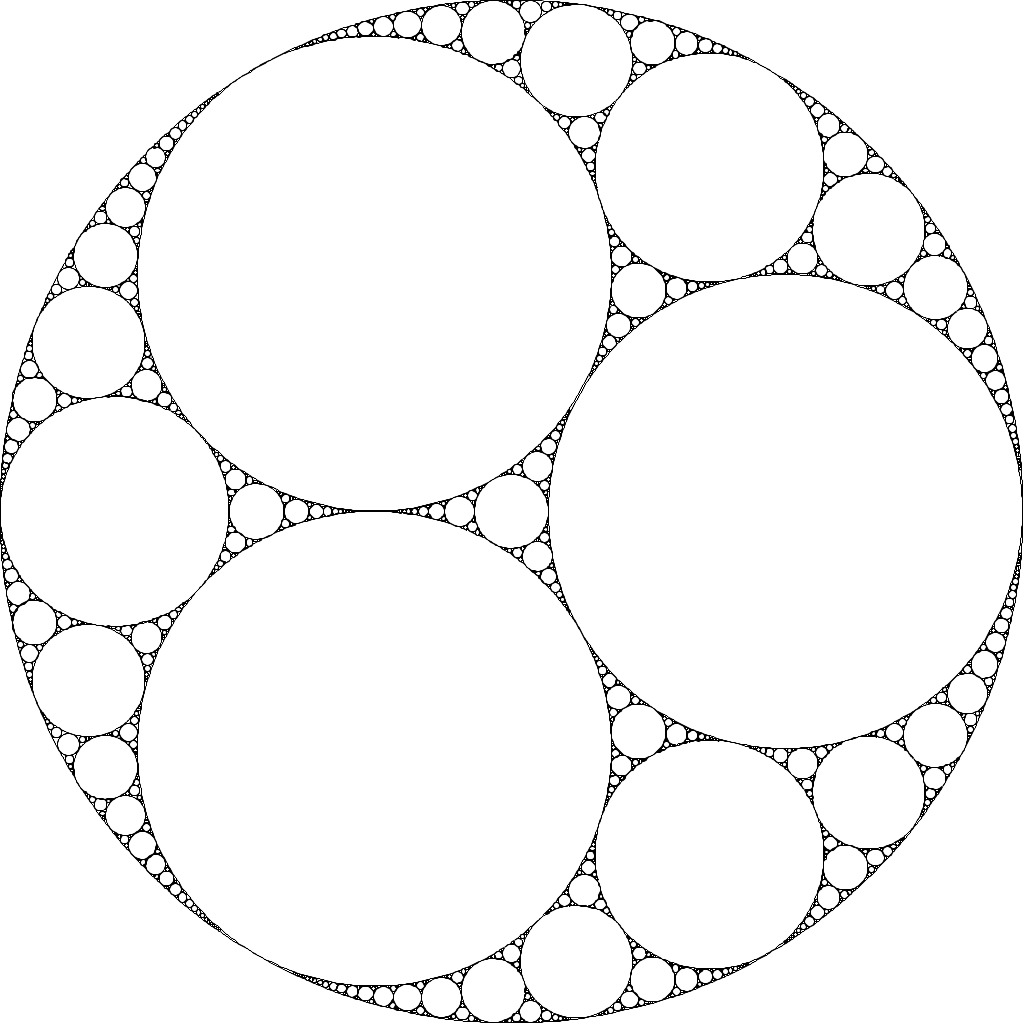} \hspace{\fill}
\includegraphics[width=0.27\linewidth]{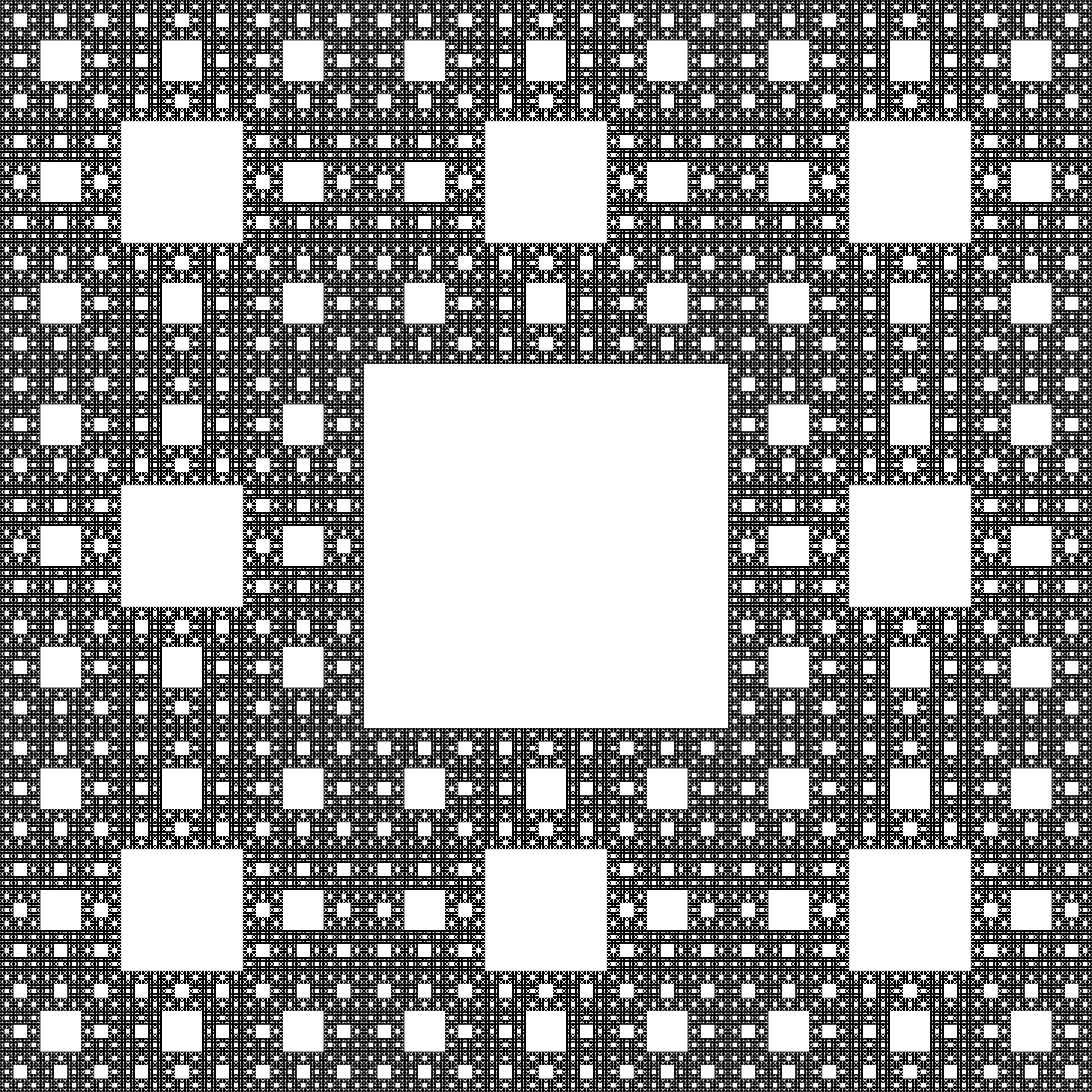} \hspace{\fill}~~
\includegraphics[height=0.27\linewidth]{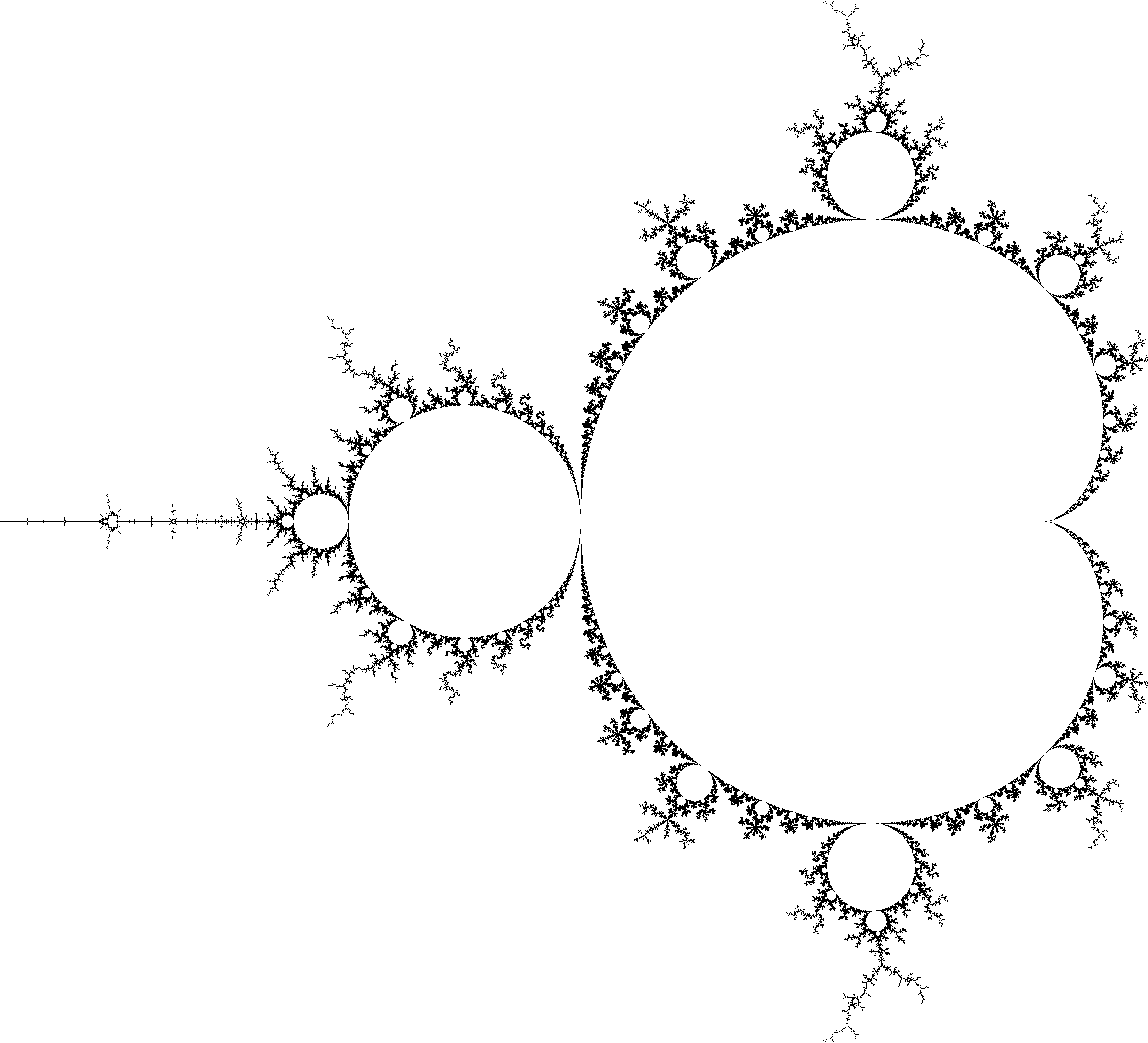}
\hspace{\fill}\,
\end{center}
\caption{By Theorem~\ref{thm:comp-julia}, each of the following continua with empty interior arise as a component of $J(f)$ for some meromorphic function $f$: the Apollonian gasket (left), the Sierpi\'nski carpet (centre), and the boundary of the Mandelbrot set (right).}
\label{fig:continuum}
\end{figure}

For a meromorphic function $f$, the \textit{escaping set} of $f$ is given by
   \[
  I(f)\defeq \bigl\{z\in\C \colon f^n(z) \textup{ is defined for all } n\in\N \textup{ and } f^n(z)\to\infty \textup{ as } n\to\infty\bigr\}.\]
We emphasise that pre-poles (points that are mapped to $\infty$ by a finite iterate of $f$) do not belong to $I(f)$. For transcendental 
entire functions, the escaping set was studied by Erermenko \cite{eremenko89}, who proved that $I(f)\cap J(f)\neq \emptyset$ and the components of $\overline{I(f)}$ are all unbounded. This definition was extended to transcendental meromorphic functions by Dom\'inguez \cite{dominguez98}, who showed that $I(f)\cap J(f)\neq \emptyset$ in this more general setting also.
However, components of the closure of the escaping set of such a function need not be unbounded \cite[p.~229]{dominguez98}.

Both Theorem~\ref{thm:comp-fatou} and \ref{thm:comp-julia} are proved using the following result (which also provides further examples of 
   bounded components of $\overline{I(f)}$).

\begin{thm}[Meromorphic functions with wandering compacta]
Let $K\subseteq \C$ be a compact set. Then there exists a transcendental meromorphic function $f$ such that\linebreak \mbox{$f_{\vert K}^n\to\infty$} uniformly as $n \to \infty$ and $f^n(K)\cap f^m(K) = \emptyset$ for $n\neq m$. Moreover:
\begin{enumerate}[(i)]
\item Every  component of $\partial K$ is a  component of $J(f)$. 
\item Every  component of $K$ is a component of $I(f)$ and of $\overline{I(f)}$. 
\item Every  component of $\interior(K)$ is a wandering domain of $f$.
\end{enumerate}
\label{thm:main}
\end{thm}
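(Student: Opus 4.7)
The plan is to construct $f$ using a meromorphic approximation theorem, extending the strategy developed for wandering domains in \cite{bocthaler21, martipete-rempe-waterman22}. Choose a rapidly escaping sequence $a_0=0, a_1, a_2, \ldots \in \C$ such that the translated compacta $K_n \defeq K + a_n$ are pairwise disjoint and escape to infinity. Setting $A \defeq \bigcup_{n \geq 0} K_n$, the set $A$ is closed with connected complement. Define the model map $g\colon A \to \C$ by $g(z) \defeq z + (a_{n+1} - a_n)$ for $z \in K_n$, so that $g$ shifts $K_n$ onto $K_{n+1}$.

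Next, apply an approximation theorem for meromorphic functions on closed subsets of $\C$ (Nersesyan's theorem, or a Runge-type variant permitting prescribed poles) to obtain a transcendental meromorphic function $f$ with $|f(z)-g(z)|<\eps_n$ on $K_n$, where $\eps_n\searrow 0$ decays rapidly. Simultaneously, the poles of $f$ are placed at a chosen discrete set $P\subset\C\setminus A$ accumulating on every component of $\partial K_n$ from outside $K_n$; infinitely many poles make $f$ transcendental. With $\eps_n$ small enough, an inductive distortion estimate shows $f^n(K)\subseteq K_n+D(0,\delta_n)$ for a small $\delta_n$, yielding $f^n_{\vert K}\to\infty$ uniformly and the disjointness $f^n(K)\cap f^m(K)=\emptyset$ for $n\neq m$.

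It remains to verify (i)--(iii). For (iii), orbits from $\interior(K)$ stay uniformly close to the escaping $K_n$, so Montel's theorem gives normality and each component of $\interior(K)$ lies in a Fatou component of $f$; these are \emph{full} components once (i) is established, since $\partial K\subseteq J(f)$ then forms their boundary, and they are wandering by the disjointness of iterates. For (i), pre-poles of all orders, which lie in $J(f)\setminus I(f)$, accumulate on $\partial K$ from outside $K$, so each component $C$ of $\partial K$ is contained in $J(f)$; maximality of $C$ among connected subsets of $J(f)$ follows by arranging the pre-poles to be isolated points of $J(f)$ near $\partial K$, each contributing only a singleton component. For (ii), the same pre-pole cloud shows that every point of $\overline{I(f)}$ in a sufficiently small neighborhood of $\partial K$ must lie in $K$, because pre-poles (which are never in $I(f)$) are dense in a punctured neighborhood of $\partial K$ outside $K$.

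The main obstacle is the last step: ensuring that the components of $\partial K$, $K$, and $\interior(K)$ arise as \emph{exactly} the components of $J(f)$, $\overline{I(f)}$, and the (wandering) Fatou set, respectively, without being absorbed into larger global components of these dynamically defined sets. This requires a careful interplay between the approximation bound (to keep orbits near the model), the pole placement (to produce pre-poles in precisely the right positions), and a topological separation argument. For compact sets $K$ with complicated topology, as in Figure~\ref{fig:continuum}, this will almost certainly build upon the separation techniques developed in \cite{martipete-rempe-waterman22}.
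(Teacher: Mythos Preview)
Your outline has a genuine gap in the separation step, and the mechanism you propose cannot work as stated.

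First, a minor point: the set $A=\bigcup_{n\geq 0}(K+a_n)$ does \emph{not} have connected complement whenever $K$ itself separates the plane (e.g.\ an annulus or a Sierpi\'nski carpet), so you cannot appeal to polynomial-type approximation there. You seem aware of this, since you mention a Runge variant with prescribed poles; this is indeed what is needed, but then the statement about connected complement should be dropped.

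The real problem is your argument for (i) and (ii). You propose to place poles so that pre-poles accumulate on $\partial K$ from outside, and then assert that these pre-poles can be arranged to be ``isolated points of $J(f)$''. This is impossible: the Julia set of any meromorphic function that is not a M\"obius transformation is a perfect set, so $J(f)$ has no isolated points. Worse, since $J(f)\subset\overline{I(f)}$, a cloud of pre-poles accumulating on $\partial K$ from outside would place points of both $J(f)$ and $\overline{I(f)}$ arbitrarily close to $\partial K$ on the wrong side, which is the opposite of what you need: it threatens to enlarge the relevant components rather than isolate them. Density of pre-poles outside $K$ also does not exclude escaping points between them, so the argument for (ii) does not go through either.

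The paper's mechanism is quite different and avoids this entirely. Instead of trying to control $J(f)$ directly near $\partial K$, one works with a \emph{nested} sequence of compact neighbourhoods $K_j\searrow K$ and builds $f$ (as a limit of rational maps via iterated Runge approximation) so that, for every $j$, the entire boundary $\partial K_j$ is eventually mapped into a fixed attracting basin. Thus each $\partial K_j$ lies in $F(f)\setminus I(f)$. Now for any $z\notin K$ there is some $j$ with $z$ and $K$ in different components of $\C\setminus\partial K_j$; since $\partial K_j$ is disjoint from $J(f)$, from $I(f)$, and from $\overline{I(f)}$, this single topological observation immediately yields that the components of $\partial K$, of $K$, and of $\interior(K)$ are exactly components of $J(f)$, of $I(f)$ (and $\overline{I(f)}$), and of $F(f)$, respectively. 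The ``Fatou barriers'' $\partial K_j$ are the missing idea in your sketch; the pre-pole cloud you describe cannot substitute for them.
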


\subsection*{Comparison with entire functions} Theorem~\ref{thm:comp-fatou} does not hold if we require $f$ to be an \emph{entire} function. Indeed, any 
  multiply connected Fatou component $U$ of an entire function must be a bounded wandering domain on which the iterates of $f$ tend to infinity;
  more precisely, for sufficiently large $n$, $f^n(U)$ separates $f^{n-1}(U)$ from infinity \cite{baker84}(see also \cite{bergweiler-rippon-stallard13}). 
  It follows that $f^n|_U$ is not injective on $U$ for large $n$, which leads to restrictions on the possible geometry of $\partial U$ (see~Proposition~\ref{prop:ev-per}). 
  In particular, certain multiply connected domains cannot be wandering domains of an entire function, or more generally (using results of Rippon and Stallard~\cite{rippon-stallard08})
  of a meromorphic function with only a finite number of poles (see Corollary~\ref{cor:not-realised}). 
     In contrast, it was observed already by Baker, Kotus and L\"u~\cite{baker-kotus-lu-90} that multiply connected
      wandering domains of meromorphic functions with infinitely many poles can be much more varied. 
      In view of the above, our functions will be constructed to have infinitely many poles, and to be injective along the orbit of the wandering domains in question. As a consequence of 
      Theorem~\ref{thm:main}, we obtain a new proof of the following result from~\cite{baker-kotus-lu-90}: for every $1\leq n\leq\infty$, 
      there exists a transcendental meromorphic function
      that has a wandering Fatou component of connectivity $n$.
      
   Simply connected wandering domains of entire functions are more flexible than multiply connected ones. Let \mbox{$U\subset\C$} be a bounded simply connected  domain, and let
      $W$ be the unbounded connected component of $\C\setminus\overline{U}$. If $\partial W = \partial U$, which is a more general hypothesis than Boc Thaler's requirement~\cite{bocthaler21} that
       $W = \C\setminus\overline{U}$ (see Lemma~\ref{lem:regulardomains} and Figure~\ref{fig:topology}), 
       it is shown in~\cite{martipete-rempe-waterman22} that $U$ is a wandering domain of an entire function. 
       It is an open question whether there exists a simply connected wandering domain $U$ of an entire function
       such that $\partial W\subsetneq \partial U$ (see \cite[Question~1.16]{martipete-rempe-waterman22}). For rational
       and meromorphic functions, such domains may arise even as \emph{invariant} Fatou components: consider the map $z\mapsto 1/f(1/z)$ when $f$ is a 
       polynomial with connected Julia set and more than one bounded Fatou component.

   We are not aware of any analogue of Theorem~\ref{thm:comp-julia} for transcendental entire functions~$f$, or indeed of any 
       explicit non-degenerate continuum in the plane that can be realised as a Julia component of a transcendental entire function. For example, it is not known whether the unit circle
       can arise as such a component. If $J(f)$ has a bounded connected component $X$, then 
      $X$ is separated from any other point of $J(f)$ by a multiply connected wandering domain.

\subsection*{Remarks on the proof} 
The proof of Theorem~\ref{thm:main} follows a similar strategy as that of~\cite{bocthaler21} and~\cite{martipete-rempe-waterman22}: the function 
  $f$ is obtained as a limit of rational functions $f_n$ such that $f_n^n$ maps a suitable compact neighbourhood $K_n$ of $K$ univalently close to infinity. The function
  $f_{n+1}$ is obtained by modifying $f_n$ on $f_n^n(K_n)$, using an approximation theorem. In order to treat more general sets $K$ and 
  obtain stronger conclusions than in the entire setting, we introduce  two additional ideas. 
  On the one hand, we use a meromorphic version of Runge's theorem (Theorem~\ref{thm:Runge}), which allows us to place poles
  within the complementary components of $K$, and can therefore lead to multiply connected wandering domains. While each approximating rational function $f_n$ has
  only finitely many poles, the inductive approach ensures that any complementary component of $K$ will eventually be mapped over such a pole (even if there are infinitely
  many components). Secondly, we can ensure~-- again, by allowing for additional poles~--  that the whole boundary of each approximating set $K_n$ 
  is eventually mapped into 
  an attracting basin of our function $f$, crucially ensuring that $K$ is separated from every other point by some Fatou component.

We remark that it is possible to vary
 the construction to ensure different types of dynamical behaviour on the compact set $K$. For example,
  we could choose points in $K$ to be non-escaping, with orbits that are unbounded but have finite accumulation points. (Compare~\cite[Theorem~1.7]{martipete-rempe-waterman22} and \cite[Theorem~2]{bocthaler21}.) In the entire case, this accumulation set is necessarily infinite; for meromorphic functions, we may also 
   choose it to consist of a finite set of pre-poles. We omit the details.

\subsection*{Acknowledgements} We thank Misha Lyubich for asking a question that led to Theorem~\ref{thm:main} at a seminar given by the third author at Stony Brook University. We also thank Mitsuhiro Shishikura for a discussion about this topic at an online seminar given by the first author at Kyoto University. We are grateful to Chris Bishop and Daniel Meyer
for interesting discussions concerning Proposition~\ref{prop:weirddomains}. Finally, we thank the referee for helpful comments that improved the presentation of this paper.

\section{Topology of Fatou components}\label{sec:topology}

In this section, we discuss the topological properties of Fatou components of meromorphic functions.
We begin with a preliminary observation.

\begin{lem}[Regular domains] \label{lem:regular-def}
A domain $U$ is regular if and only if $\partial U=\partial \overline{U}$.
\end{lem}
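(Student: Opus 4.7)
The plan is to unpack both sides using standard identities for open sets: if $U$ is open then $\overline{U}=U\sqcup\partial U$, and for any set $A$ one has $\partial A=\overline{A}\setminus\mathrm{int}(A)$. In particular $\partial U=\overline{U}\setminus U$ and $\partial\overline{U}=\overline{U}\setminus\mathrm{int}(\overline{U})$, since $\overline{U}$ is already closed. Also, because $U$ is open and $U\subseteq\overline{U}$, we always have the inclusion $U\subseteq\mathrm{int}(\overline{U})$.

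For the forward direction, I would assume $\mathrm{int}(\overline{U})=U$ and simply substitute to get $\partial\overline{U}=\overline{U}\setminus\mathrm{int}(\overline{U})=\overline{U}\setminus U=\partial U$.

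For the reverse direction, I would assume $\partial U=\partial\overline{U}$ and show $\mathrm{int}(\overline{U})\subseteq U$ (the other inclusion being automatic). Take $x\in\mathrm{int}(\overline{U})$. Then $x\in\overline{U}$, and $x\notin\partial\overline{U}$ because $\mathrm{int}(\overline{U})$ and $\partial\overline{U}$ are disjoint. By the hypothesis, $x\notin\partial U$, so $x\in\overline{U}\setminus\partial U=U$, as required.

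There is no real obstacle here; the argument is a short exercise in point-set topology using only the definitions of interior, closure and boundary together with the fact that $U$ is open. The only thing to be careful with is distinguishing $\partial U$ from $\partial\overline{U}$ and noting that $U$ being open guarantees $\partial U=\overline{U}\setminus U$, which is the feature that makes the two sides line up cleanly.
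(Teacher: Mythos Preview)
Your proof is correct and follows essentially the same route as the paper: both arguments use the identities $U=\overline{U}\setminus\partial U$ (from $U$ open) and $\operatorname{int}(\overline{U})=\overline{U}\setminus\partial\overline{U}$, and then observe that equality of the two complements in $\overline{U}$ is equivalent to equality of the sets being removed. The paper just states this equivalence in one line rather than splitting into two directions.
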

\begin{proof}
Recall that a domain $U$ is regular if and only if $U = \textup{int}\,\overline{U}$. On the one hand, we have $\operatorname{int} \overline{U}=\overline{U}\setminus \partial \overline{U}$. On the other hand, since $U$ is open, 
  $U = \overline{U}\setminus \partial U$. Hence $U = \operatorname{int} \overline{U}$ if and only if $\partial U = \partial \overline{U}$. \end{proof}

Boc Thaler \cite[p.~2]{bocthaler21} 
   observed that wandering domains of transcendental entire functions are regular 
    domains. To prove that \ref{item:Fatoucomponent} implies \ref{item:Uregular} in 
   Theorem~\ref{thm:comp-fatou},
    we note that more generally \emph{any} (not necessarily bounded, or wandering) 
    Fatou component of a meromorphic function $f$
    is regular, unless $F(f)$ is connected.
      Since we are not aware of a reference for this more general statement, we provide the proof here.
                           
\begin{lem}[Fatou components are regular] \label{lem:regular-fatou-comp}
  Let $f$ be a meromorphic function and suppose that $F(f)$ is disconnected. Then every Fatou component of $f$
 is a regular domain. 
\end{lem}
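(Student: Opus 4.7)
The plan is to show $\operatorname{int}(\overline{U}) = U$ for every Fatou component $U$, which by Lemma~\ref{lem:regular-def} is equivalent to regularity. Since the inclusion $U \subseteq \operatorname{int}(\overline{U})$ is automatic, I would argue by contradiction, assuming there exists $z_0 \in \operatorname{int}(\overline{U}) \setminus U$. Such a point necessarily lies in $\partial U \subseteq J(f)$, and I can fix an open ball $B$ with $z_0 \in B \subseteq \overline{U}$.

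The heart of the argument combines the blowing-up property of the Julia set with the disjointness of distinct Fatou components. Letting $U_n$ be the (unique) Fatou component containing $f^n(U)$, continuity of $f^n$ yields $f^n(B) \subseteq f^n(\overline{U}) \subseteq \overline{U_n}$ for all $n \geq 0$. Since $z_0 \in J(f)$, the blowing-up property for meromorphic $f$ (Dom\'inguez~\cite{dominguez98}) forces $\bigcup_n f^n(B)$ to omit at most two points of $\widehat{\mathbb{C}}$. Now any Fatou component $U'$ distinct from all $U_n$ satisfies $U' \cap \overline{U_n} = \emptyset$ for every $n$, because any point in such an intersection would be in $U_n$ (forcing $U' = U_n$, a contradiction) or in $\partial U_n \subseteq J(f)$ (contradicting $U' \subseteq F(f)$). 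Consequently $U'$ would be contained in the at-most-two-point exceptional set of the blowing-up, which is impossible as $U'$ is open and non-empty. I conclude that every Fatou component appears as some $U_n$, so $F(f) = \bigcup_{n \geq 0} U_n$ reduces to the forward orbit of $U$.

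To close the argument I would exploit disconnectedness via a preimage analysis. The set $f^{-1}(U_0)$ is a non-empty (by Picard's theorem, as $U_0$ contains non-exceptional values) open subset of $F(f)$, and its components are Fatou components $U_m$ satisfying $U_{m+1} = U_0$. If the forward orbit $(U_n)$ is wandering, no such $m \geq 0$ exists, immediately giving $f^{-1}(U_0) = \emptyset$~-- a direct contradiction. If instead the orbit is periodic with period $p$ (necessarily $p \geq 2$, since otherwise $F(f) = U$ would be connected), then $f^{-1}(U_0) = U_{p-1}$ is a single Fatou component; one would then need to derive the contradiction by iterating this identity around the cycle and combining it with the infinite valence of $f$ at generic points (again by Picard). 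The main obstacle I foresee is closing off this periodic subcase cleanly, since for transcendental meromorphic $f$ a single Fatou component can support an infinite-to-one covering (as occurs for Baker domains), and a finer topological or dynamical argument~-- possibly invoking the classification of periodic Fatou components~-- appears to be needed.
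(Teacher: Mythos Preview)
Your argument is sound through the conclusion that every Fatou component equals some $U_n$, and the non-periodic case (where $f^{-1}(U)$ is forced to be empty, contradicting Picard) is handled correctly. The gap is exactly where you place it, but the valence route you propose will not close it: there is no obstruction to $F(f)$ consisting of a single $p$-cycle with $f^{-1}(U_i)=U_{i-1}$ for each $i$ (for instance $f(z)=1/z^{2}$, whose Fatou set is the pair of complementary discs of the unit circle forming a $2$-cycle), so no contradiction can be extracted from the equality $f^{-1}(U_0)=U_{p-1}$ together with degree information. The actual fix is far simpler than what you suggest: if $U$ has period $p$, run your entire argument with $g=f^{p}$ in place of $f$. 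Then $J(g)=J(f)$, $F(g)=F(f)$ is still disconnected, $z_0\in J(g)$, and $U$ is \emph{fixed} for $g$, so $g^{n}(B)\subseteq\overline{U}$ for all $n$. Any component $V\neq U$ of $F(g)$ satisfies $V\cap\overline{U}=\emptyset$, hence the open set $V$ lies in the at-most-two-point exceptional set of the blowing-up for $g$ at $z_0$; contradiction. No preimage bookkeeping or classification of periodic components is needed.

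For comparison, the paper avoids both the contradiction set-up and the case split by arguing directly with the backward form of the blowing-up property. It selects an open set $V$ disjoint from every $f^{nj}(\overline{U})$~-- taking $V$ to be any other Fatou component and $n$ the period when $U$ is periodic, or $V=f^{-1}(U)$ and $n=1$ otherwise~-- so that $\bigcup_{j\geq 0}f^{-nj}(V)\subset\C\setminus\overline{U}$. Density of backward orbits of an open set in $J(f^n)=J(f)$ then yields $\partial U\subset J(f)\subset\overline{\C\setminus\overline{U}}$, i.e.\ $\partial U=\partial\overline{U}$. Your forward-orbit Montel argument is essentially the contrapositive of this, and once the periodic case is patched as above the two proofs are equivalent in content; the paper's formulation is simply more direct.
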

\begin{proof}
 Let $U$ be a Fatou component of $f$; the hypothesis implies that $F(f)\setminus U\neq \emptyset$.
   If $U$ is periodic, let $n$ be its period and let $V$ be any other Fatou component of $f$.
   Otherwise, set $V=f^{-1}(U)$ and $n=1$. In either case, 
      $f^{nj}(\overline{U})\cap V = \emptyset$ for all $j\geq 0$. In other words, $\bigcup_{j=0}^{\infty} f^{-nj}(V) \subset  \C\setminus \overline{U}$. 
      
      Since $V$ is a non-empty open set,  we have 
     \[ J(f) = J(f^n) \subseteq \overline{\bigcup_{j=0}^{\infty} f^{-nj}(V)} \subset \overline{  \C\setminus \overline{U}}\]
     by the blowing-up property of the Julia set for $f^n$~\cite[Theorem~A(5)]{bakerdominguezherring}). 
    So 
       \[ \partial U \subset J(f)\subset \overline{\C\setminus \overline{U}}. \] 
     Thus $\partial U = \partial \overline{U}$ and $U$ is regular  by Lemma~\ref{lem:regular-def}. 
\end{proof}

\begin{figure}
\centering
\captionsetup[subfigure]{justification=centering}
\vspace*{-0.4cm}
\begin{subfigure}{0.48\textwidth} \centering \includegraphics[width=.8\linewidth]{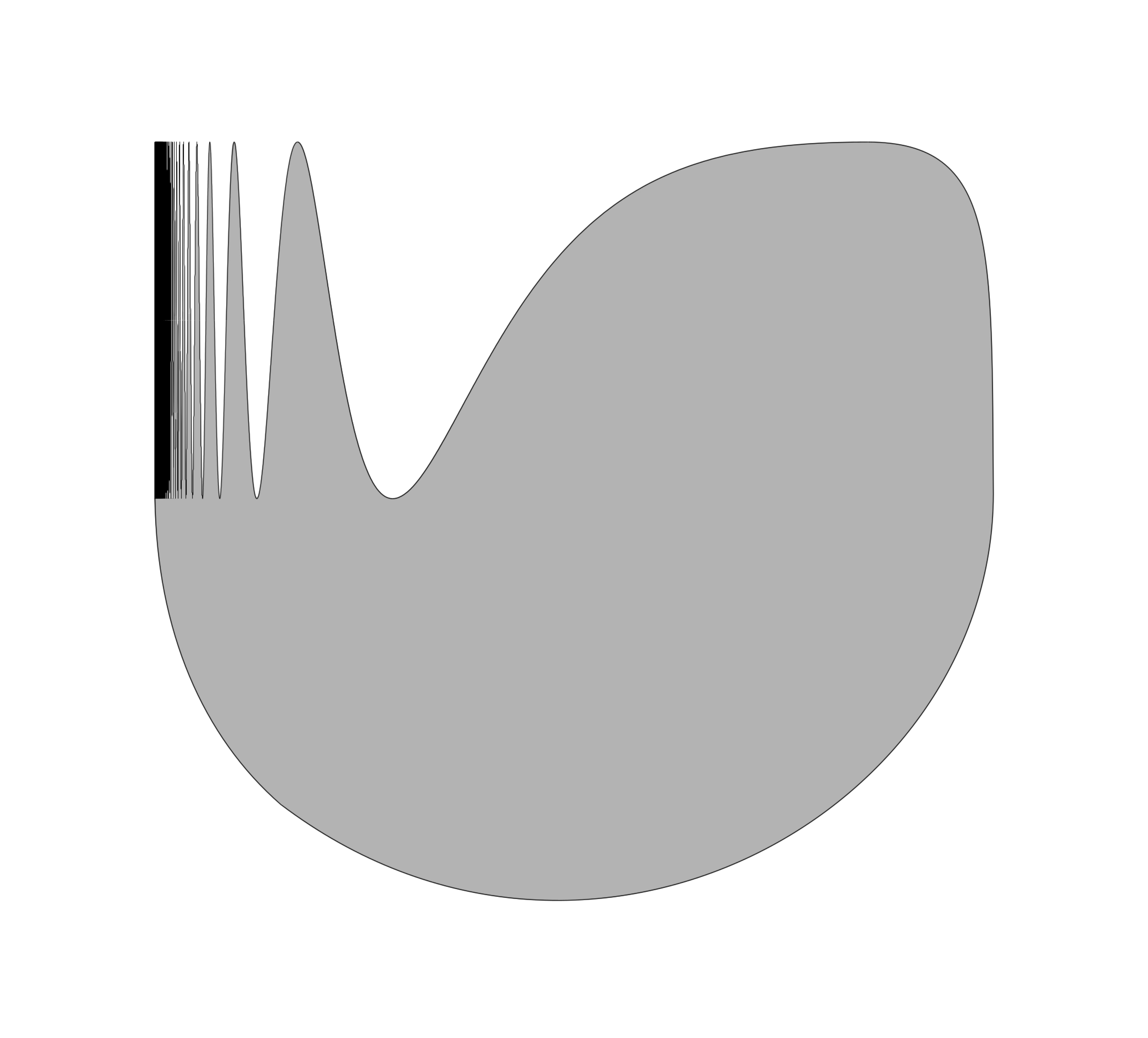} \vspace{-0.1cm} \caption{ $U_1$: interior of a Warsaw circle \\ $U_1$ is regular\\ $\partial U_1 = \partial \Fill(\overline{U_1})$\\ $\C\setminus \overline{U_1}$ is connected}
 \end{subfigure}
\begin{subfigure} {0.48\textwidth} \centering \includegraphics[width=0.8\linewidth]{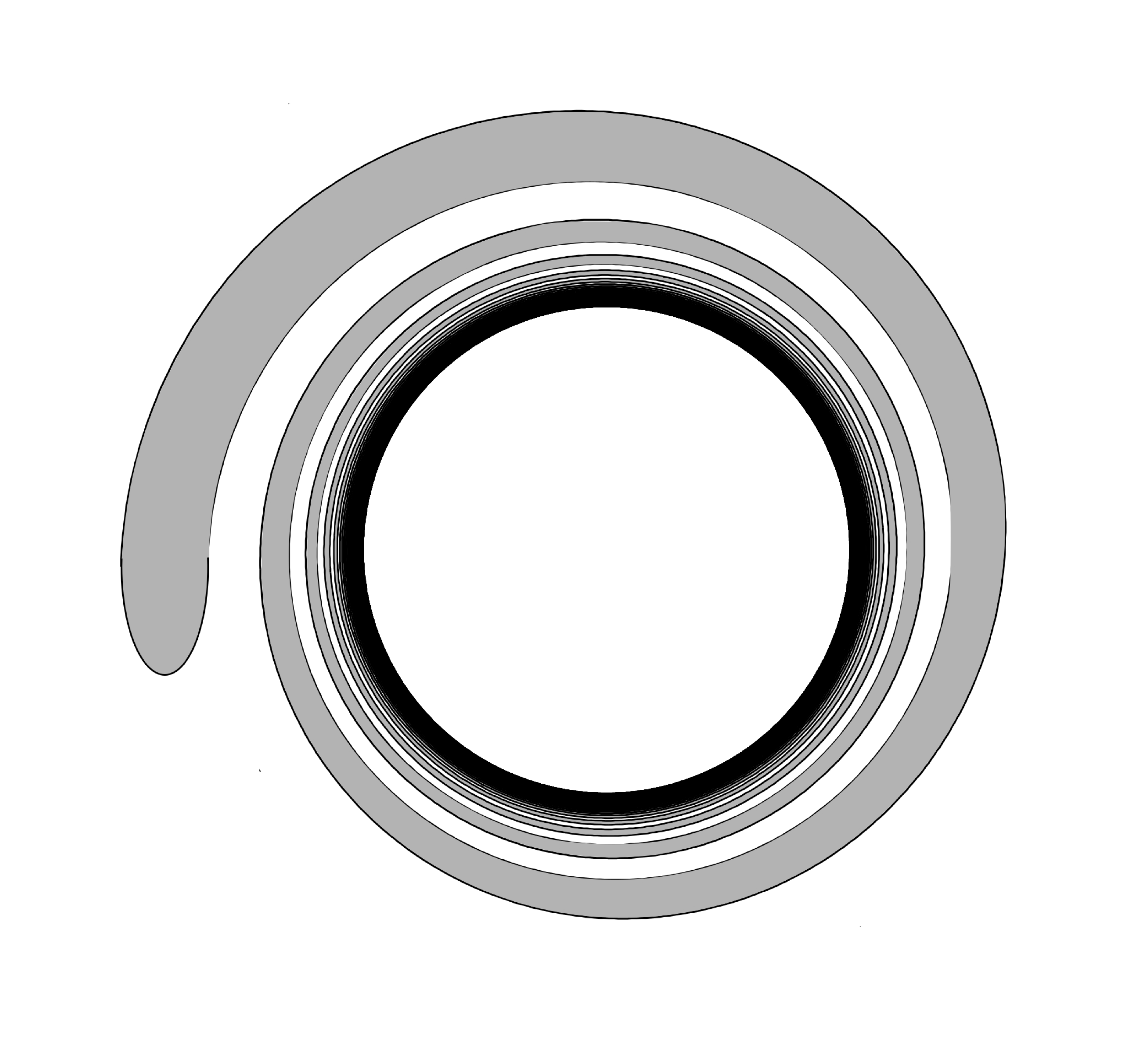} \vspace{-0.1cm} \caption{$U_2$: inwards spiral\\ $U_2$ is regular\\ $\partial U_2 = \partial \Fill(\overline{U_2})$\\ $\C\setminus \overline{U_2}$ is disconnected}\end{subfigure}
 \\
 \vspace{0.2cm}
 \begin{subfigure}{0.48\textwidth} \centering \includegraphics[width=0.8\linewidth]{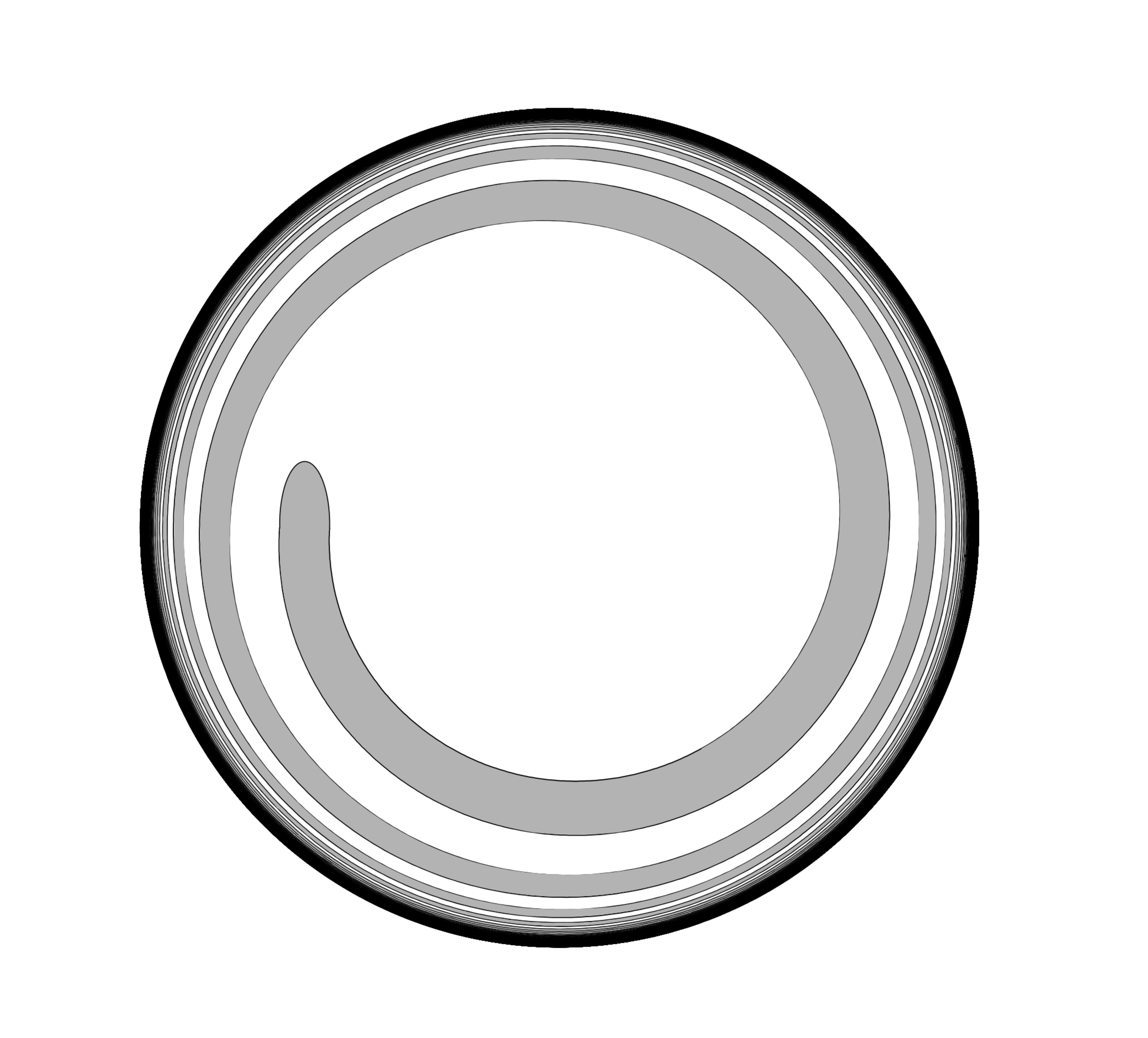} \caption{$U_3$: outwards spiral\\ $U_3$ is regular\\ $\partial U_3 \supsetneq \partial \Fill(\overline{U_3})$\\ $\C\setminus \overline{U_3}$ is disconnected}
  \end{subfigure}
  \begin{subfigure}{0.48\textwidth} \centering \includegraphics[width=.8\linewidth]{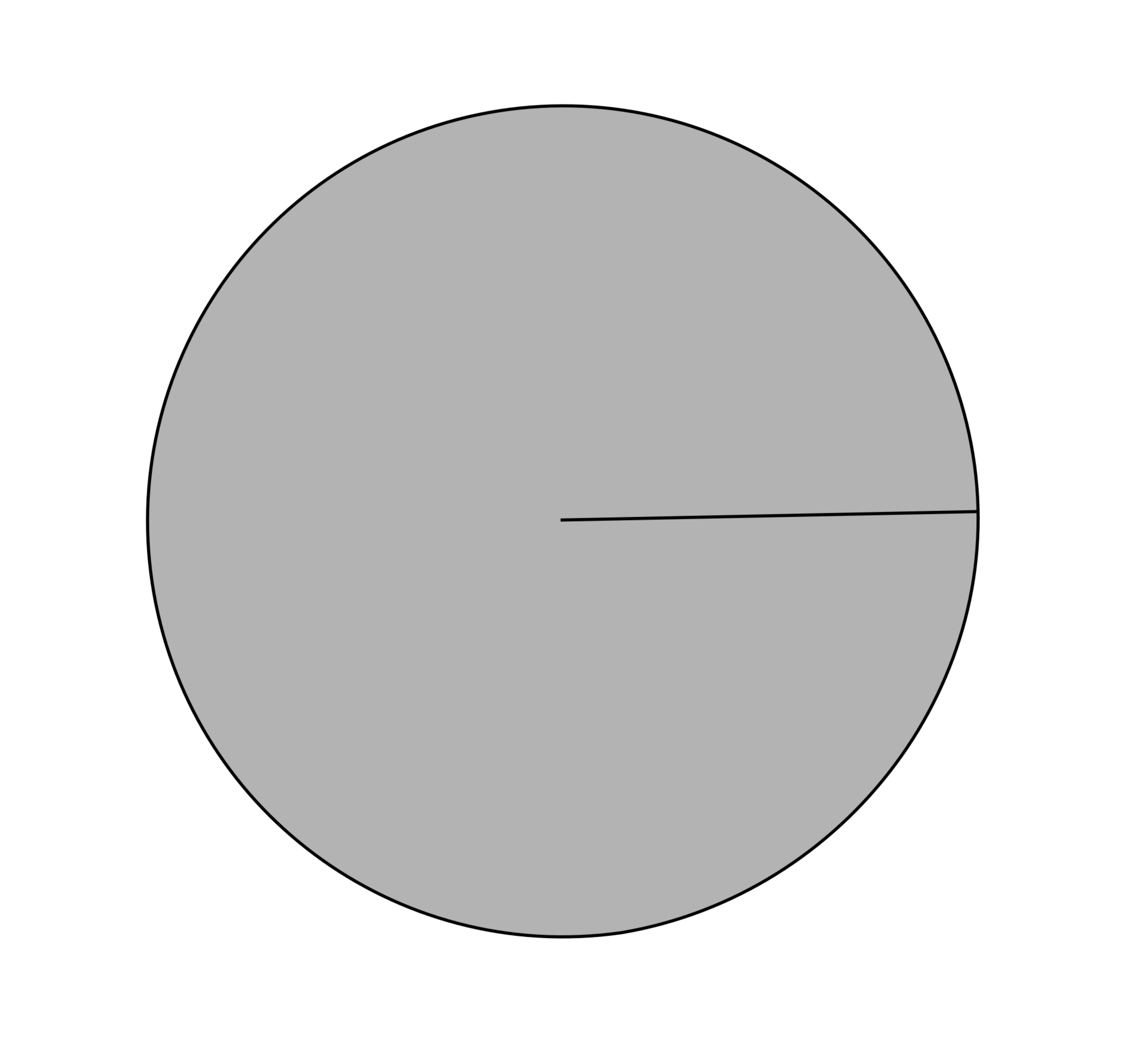}  
\caption{$U_4$: slit disc\\ $U_4$ is not regular\\ $\partial U_4 \supsetneq \partial \Fill(\overline{U_4})$\\ $\C\setminus\overline{U_4}$ is connected}
\end{subfigure}
 \caption{Four examples $U_1,\dots, U_4$ of simply connected domains.
    The first three may arise as Fatou components of meromorphic functions by Theorem~\ref{thm:comp-fatou}, while $U_4$ does not by Lemma~\ref{lem:regular-fatou-comp}. 
   For $U_1$ and $U_2$, the function $f$ may be chosen to be entire by~\cite{bocthaler21} and \cite{martipete-rempe-waterman22}, respectively. It is an open   
    question whether or not $U_3$ can be realised as a Fatou component of an entire function.}\label{fig:topology}
\end{figure}

We now compare the different topological notions for regular simply connected domains discussed in the introduction. 
  If $K\subset\C$ is compact, and $W$ is the unbounded connected component of
   $\C\setminus K$, then $\Fill(K) \defeq \C\setminus W$ is called the \emph{fill} of $K$. 

\begin{lem}[Bounded simply connected domains]\label{lem:regulardomains}
 Consider the following properties of a bounded simply connected domain $U$. 
  \begin{enumerate}[(1)]
    \item $U$ is regular and $\C\setminus \overline{U}$ is connected.\label{item:bocthaler}
    \item $\partial U = \partial \Fill(\overline{U})$.\label{item:fillboundary}
    \item $U$ is regular.\label{item:Uregulartopology}
  \end{enumerate}
   Then \ref{item:bocthaler} $\Rightarrow$~\ref{item:fillboundary} $\Rightarrow$~\ref{item:Uregulartopology}. Moreover, the converse of neither implication 
   holds in general. 
\end{lem}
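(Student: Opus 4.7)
The plan is to deduce both implications from Lemma~\ref{lem:regular-def} (the characterisation of regularity as $\partial U=\partial\overline{U}$), together with elementary set-theoretic properties of the fill.

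For (1)~$\Rightarrow$~(2): Since $U$ is bounded, $\C\setminus\overline{U}$ is unbounded, and the connectedness hypothesis then forces it to coincide with its unbounded complementary component $W$. Hence $\Fill(\overline{U})=\C\setminus W=\overline{U}$, so $\partial\Fill(\overline{U})=\partial\overline{U}=\partial U$, the final equality coming from regularity and Lemma~\ref{lem:regular-def}.

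For (2)~$\Rightarrow$~(3): I would first verify the general set-theoretic inclusion $\partial\Fill(\overline{U})\subseteq\partial\overline{U}$. Any $z\in\partial\Fill(\overline{U})$ must lie in $\Fill(\overline{U})$, because $W=\C\setminus\Fill(\overline{U})$ is open; it also lies in $\overline{W}\subseteq\overline{\C\setminus\overline{U}}=\C\setminus\operatorname{int}\overline{U}$; and it cannot lie in a bounded complementary component $V$ of $\C\setminus\overline{U}$, since such a $V$ is open and disjoint from $W$, contradicting $z\in\overline{W}$. Thus $z\in\overline{U}\setminus\operatorname{int}\overline{U}=\partial\overline{U}$. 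Combining this with the always-valid inclusion $\partial\overline{U}\subseteq\partial U$ (a consequence of $U\subseteq\operatorname{int}\overline{U}$) and hypothesis~(2) yields $\partial U=\partial\overline{U}$, so $U$ is regular by Lemma~\ref{lem:regular-def}.

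For the failure of the converses, I would appeal to the spiral-shaped domains $U_2$ and $U_3$ depicted in Figure~\ref{fig:topology}. The inwards-spiral $U_2$ is simply connected and regular with $\partial U_2=\partial\Fill(\overline{U_2})$, yet the region trapped by its winding produces a bounded complementary component of $\C\setminus\overline{U_2}$; this refutes the converse of (1)~$\Rightarrow$~(2). The outwards-spiral $U_3$ is simply connected and regular, but its fill is a topological disc whose boundary consists only of the outermost envelope of the spiral and is properly contained in $\partial U_3$; this refutes the converse of (2)~$\Rightarrow$~(3). I expect the main obstacle to lie in making these two examples rigorous~-- for instance, by exhibiting $U_2$ and $U_3$ as images of a horizontal strip under an explicit planar homeomorphism~-- and in then checking the asserted equality $\partial U_2=\partial\Fill(\overline{U_2})$ and strict inclusion $\partial\Fill(\overline{U_3})\subsetneq\partial U_3$ directly from the construction; the implications themselves reduce to short manipulations of closures, interiors and complements once Lemma~\ref{lem:regular-def} and the inclusion $\partial\Fill(\overline{U})\subseteq\partial\overline{U}$ are in place.
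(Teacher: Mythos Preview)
Your proposal is correct and follows essentially the same approach as the paper: both arguments reduce the implications to Lemma~\ref{lem:regular-def} together with the chain $\partial\overline{U}\subseteq\partial U=\partial\Fill(\overline{U})\subseteq\partial\overline{U}$, and both invoke the spiral domains $U_2$, $U_3$ from Figure~\ref{fig:topology} for the failure of the converses. The only difference is that you spell out a proof of the inclusion $\partial\Fill(\overline{U})\subseteq\partial\overline{U}$, which the paper simply asserts.
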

\begin{proof}
   If \ref{item:bocthaler} holds, then by Lemma~\ref{lem:regular-def}, $\partial U = \partial \overline{U} = \partial(\C\setminus \overline{U}) = \partial \Fill(\overline{U})$. 
     If~\ref{item:fillboundary} holds, then $\partial \overline{U} \subset \partial U = \partial \Fill(\overline{U}) \subset \partial \overline{U}$, so both inclusions are equalities, and 
     $U$ is regular by Lemma~\ref{lem:regular-def}. For the final claim, see the examples in Figure~\ref{fig:topology}. 
\end{proof}

\section{Domains that cannot be eventually periodic Fatou components}

In this section, we show that certain 
     regular~-- even Jordan~-- 
     domains cannot arise as \emph{eventually periodic} Fatou components
     of a meromorphic function $f$, or even as wandering domains on which
     the iterates are not univalent. 
     Indeed, for such a domain there must be different points on the boundary
     that are related by a combination of 
     locally univalent forward and backward iterates of $f$ (see Proposition~\ref{prop:ev-per}). 
     So a domain whose boundary
     near any given point is not biholomorphically equivalent to the boundary
     near any other cannot be realised as a Fatou component of this type.
     
To provide the details of this argument, we use a version of the Gross star theorem for iterates
     of meromorphic functions, which follows from the general form of the Gross theorem given by Kaplan~\cite[Theorem~3]{kaplan}.
     
\begin{thm}[Gross star theorem for iterates]\label{thm:gross}
 Let $g\colon\C\to\Ch$ be a meromorphic function, and let 
      $z_0\in\C$ and $k\geq 1$ be 
      such that $w_0\defeq g^k(z_0)$ is defined and $(g^k)'(z_0)\neq 0$. 
      
   Let $W\ni w_0$ be a simply connected domain; for $\theta\in (0,2\pi)$, let
     $\gamma_{\theta}$ denote the hyperbolic geodesic of $W$ 
     starting at $w_0$ in the direction $\theta$. Then, for almost every 
     $\theta\in (0,2\pi)$, $\gamma_{\theta}$ is an arc connecting $w_0$ to an end-point $\omega_\theta \in \partial W$, and the branch $\beta$ of $g^{-k}$ that maps $w_0$ to $z_0$
     can be analytically continued along $\gamma_{\theta}$ into $\omega_\theta$.
\end{thm}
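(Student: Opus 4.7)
The plan is to deduce the statement as essentially a direct corollary of Kaplan's Theorem~3 in~\cite{kaplan}, after identifying the appropriate function element and transferring the geometry on $W$ to the unit disc via a Riemann map. The bulk of the work is carried by Kaplan's theorem itself, and what remains is bookkeeping.

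First, I would construct the function element. Since $(g^k)'(z_0)\neq 0$, the inverse function theorem supplies a holomorphic branch $\beta$ of $g^{-k}$ defined near $w_0$ with $\beta(w_0)=z_0$. The obstructions to analytic continuation of $\beta$ along a path in $\C$ are precisely the singular values of the meromorphic iterate $g^k$ (its critical values, which are at most countably many since the critical points of $g^k$ are isolated, together with its asymptotic values). Second, I would fix a Riemann map $\phi\colon\D\to W$ with $\phi(0)=w_0$, so that the hyperbolic geodesics of $W$ emanating from $w_0$ correspond bijectively to the radii of $\D$; the direction $\theta$ at $w_0$ then translates to a boundary angle via $\arg\phi'(0)$, and $\gamma_\theta$ is the $\phi$-image of the corresponding radius. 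By Fatou's theorem on radial limits of conformal maps (applied, if necessary, after post-composition with a Möbius transformation to reduce to the bounded case), for almost every $\theta\in(0,2\pi)$ the arc $\gamma_\theta$ is rectifiable and terminates at a point $\omega_\theta\in\partial W$.

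Finally, I would apply Kaplan's Theorem~3 to the function element $(\beta,w_0)$ inside $W$: it asserts that, up to a set of directions of measure zero, $\beta$ admits analytic continuation along $\gamma_\theta$ all the way to the end-point $\omega_\theta$. Intersecting this full-measure set of $\theta$ with the one produced by Fatou's theorem yields the conclusion. The main obstacle — really the only one, since Kaplan's theorem does the heavy lifting — is verifying that the angular parameterisation used in Kaplan's statement (by boundary angle in the uniformising disc) is compatible with the tangential parameterisation in our statement (by direction at $w_0$); this is a routine but essential point of bookkeeping, since the differential of $\phi$ at $0$ gives an absolutely continuous correspondence between the two and therefore preserves the notion of ``almost every $\theta$''.
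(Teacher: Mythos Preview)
Your overall strategy matches the paper's: transfer to the disc via a Riemann map, use Fatou's theorem for the existence of radial limits, and invoke Kaplan's Theorem~3 for the continuation of $\beta$. However, there is a genuine gap in your final step. Kaplan's Theorem~3, as applied here, only yields that for almost every $\theta$ the branch $\beta$ can be continued along the \emph{open} geodesic $\gamma_\theta$~--- that is, for all parameter values $t<h$ in the paper's notation. It does \emph{not} directly give continuation \emph{into} the endpoint $\omega_\theta\in\partial W$, which is what the statement requires. Even when $\gamma_\theta$ lands at $\omega_\theta$ and $\beta$ continues along all of $\gamma_\theta\cap W$, the values $\beta(\gamma_\theta(t))$ might fail to converge to a point where $g^k$ is defined and locally invertible: they could accumulate on a point of $\Ch\setminus g^{-(k-1)}(\C)$ (a pre-pole of $g$ of order at most $k-1$) or on a critical point of $g^k$, and in that case $\beta$ does not extend analytically to $\omega_\theta$.

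The paper closes this gap with a separate argument: the set of such obstructing limit points is countable, and for each one, Kaplan's Theorem~1 (not Theorem~3) shows that the set of directions $\theta$ for which $\beta(\gamma_\theta(t))$ converges to that specific point has measure zero. Summing over the countable set finishes the proof. You should add this step explicitly rather than folding it into the citation of Theorem~3. (Incidentally, the parametrisation issue you flag at the end is simpler than you suggest: the correspondence between tangential direction at $w_0$ and radial direction in $\D$ is a rigid rotation by $\arg\phi'(0)$, hence measure-preserving, not merely absolutely continuous; no subtlety arises there.)
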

\begin{proof}
The claim follows from~\cite[Theorem~3]{kaplan} as follows. Adopting the notation of~\cite[Section~3]{kaplan}, we set
  $\phi = g^k$ and let $D=g^{-(k-1)}(\C) = \phi^{-1}(\Ch)$ be its domain of definition; note that $\Ch\setminus D$ is countable. Let $\eta \colon \D\to W$ be
  a conformal isomorphism with $\eta(0)=w_0$ and $\eta'(0)>0$, and let $\eps\in(0,1/2)$ be sufficiently small that
    the branch $\beta$ is defined on $\eta(D(0,2\eps))\subset W$.
   Then we take 
  Kaplan's $f$ to be 
     \[ f(\sigma) = \eta\bigl(e^{\log\eps -2\pi i \sigma }\bigr),\] 
  where $\sigma  = s + it$ with $0<s<1$ and $-\infty < t < h_{s} \defeq h \defeq -\log\eps$. 
  For fixed $s\in (0,1)$, the curve $\gamma_{-2\pi s}\colon t\mapsto f(s + it)$ is a parameterisation of 
   the geodesic $\gamma_{-2\pi s}$ of $W$. Let $h_1(s) \geq b\defeq \log 2$ be maximal such that 
  $\beta$ can be analytically continued along $\gamma_{-2\pi s}$ for $t<h_1(s)$. 
  
  Now~\cite[Theorem~3]{kaplan} implies that $h_1(s) = h$ for almost all $s$; that is, $\beta$ can be continued analytically
  along the entire geodesic $\gamma_{-2\pi s}$. 
   By Fatou's theorem, $\gamma_{-2\pi s}(t)$ also has a limit point 
  $\omega_{-2\pi s} \in \partial W$ as $t\to h$, for almost every $s$ (see~\cite[Theorem~17.4]{milnor06}). If $s$ belongs to the set of full measure for which both properties hold, then either
  $\beta$ can be analytically continued into $\omega_{-2\pi s}$, or 
  $\beta(\gamma_{-2\pi s}(t))$ tends to a point of $D$ or to a critical point of $f^k$ as $t\to h$. Since the set of such points is countable,
  it follows from~\cite[Theorem~1]{kaplan}  (as in the proof of~\cite[Theorem~3]{kaplan}) that the set of $s$ with the latter property has measure zero, completing the proof.
\end{proof}

\begin{prop}[Locally biholomorphic boundary] \label{prop:ev-per}
Let $f$ be a meromorphic function and suppose $U$ is a (bounded or unbounded) Fatou component of~$f$ whose boundary is a 
  finite union of Jordan curves. 
  Suppose furthermore that $U$ is either eventually periodic, or that there is some
    $k\geq 1$ such that $f^k|_U$ is not injective. 
  
   Then there are $\zeta_1 ,\zeta_2\in \partial U$, $\zeta_1\neq \zeta_2$, neighbourhoods $V_1,V_2$ of
      $\zeta_1$ and $\zeta_2$, respectively, and a biholomorphic map $\psi\colon V_1\to V_2$ such that $\psi(V_1\cap \partial U) = V_2\cap \partial U$ and $\psi(\zeta_1) = \psi(\zeta_2)$.
\end{prop}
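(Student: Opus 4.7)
The plan is to produce two distinct boundary points $\zeta_1,\zeta_2\in\partial U$ and a local biholomorphism $\psi$ exchanging them, either by analytically continuing two distinct inverse branches of an iterate of $f$ from an interior point all the way to $\partial U$ using the Gross star theorem (Theorem 2.3), or by extracting a nontrivial self-biholomorphism of $U$ from its eventually periodic dynamics. The hypothesis splits cleanly into: (1) some $f^k|_U$ fails to be injective (covering all wandering+non-injective and periodic+non-injective scenarios), or (2) $U$ is eventually periodic and every $f^k|_U$ is injective (the Siegel disk / Herman ring / Baker domain / pre-periodic scenarios).

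In Case (1), pick $k\geq 1$ and $z_1\neq z_2\in U$ with $w_0:=f^k(z_1)=f^k(z_2)$; after a generic perturbation of $w_0$ inside the Fatou component $V\supseteq f^k(U)$, I may assume neither $z_j$ is a critical point of $f^k$. Choose a simply connected subdomain $W\subseteq V$ with $w_0\in W$ and a nontrivial arc of $\partial V$ lying in $\partial W$; this choice is possible because the Jordan-curve hypothesis on $\partial U$ forces $\partial V$ to be locally a Jordan arc at generic points. Apply Theorem 2.3 to each of the two branches $\beta_1,\beta_2$ of $(f^k)^{-1}$ mapping $w_0$ to $z_1,z_2$ respectively: for almost every direction $\theta$ the $W$-geodesic $\gamma_\theta$ terminates at some $\omega_\theta\in\partial W$ and both $\beta_j$ continue analytically into $\omega_\theta$. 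Discarding a measure-zero set of directions, I further arrange $\omega_\theta\in\partial V$ and $\omega_\theta$ not a critical value of $f^k$. Setting $\zeta_j:=\beta_j(\omega_\theta)$, continuation along $\beta_j(\gamma_\theta)\subseteq U$ gives $\zeta_j\in\overline{U}$, while $\omega_\theta\in J(f)$ and complete invariance of $J(f)$ under $f^k$ give $\zeta_j\in J(f)$, so $\zeta_j\in\overline{U}\cap J(f)=\partial U$. The inequality $\zeta_1\neq\zeta_2$ follows because two analytic branches of $(f^k)^{-1}$ agreeing at the regular value $\omega_\theta$ would agree as germs there and hence on their common domain, contradicting $\beta_j(w_0)=z_j$. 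Setting $\psi:=\beta_2\circ\beta_1^{-1}$ produces the required biholomorphism with $\psi(\zeta_1)=\zeta_2$; local preservation of $\partial U$ follows because in a small disk around $\omega_\theta$ the arc $\partial V$ separates a $V$-side from its outside, each $\beta_j$ biholomorphically maps that arc to the local arc of $\partial U$ near $\zeta_j$ (as connectedness forces the $V$-side to land in $U$), and compositions of two such arc-preserving biholomorphisms do the same.

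In Case (2), choose $m<n$ with $f^m(U)=f^n(U)=:V$; then $\sigma:=(f^m|_U)^{-1}\circ f^n|_U$ is a biholomorphic self-map of $U$, and it is not the identity --- otherwise $f^{n-m}$ would fix every point of the open set $V$ and so coincide with the identity on all of $\C$ by analytic continuation, contradicting the hypotheses on $f$. At a generic $\zeta_1\in\partial U$, avoiding poles of $f^n$, critical points of $f^n$, the (discrete) fixed set of $\sigma$ on $\partial U$, and preimages of critical points of $f^m$ under $f^n$, the map $\sigma$ extends holomorphically and invertibly to a neighbourhood of $\zeta_1$ and sends the local Jordan arc of $\partial U$ through $\zeta_1$ into $\partial U$; setting $\zeta_2:=\sigma(\zeta_1)$ and $\psi:=\sigma$ concludes this case.

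The main obstacle is the non-identity step in Case (2): one must boost the fact that $f^{n-m}\neq\mathrm{id}$ to $\sigma\not\equiv\mathrm{id}$ on $\partial U$, which is handled by the identity principle applied to the meromorphic extension of $\sigma$ across a boundary arc. A secondary technical point, in Case (1), is the selection of the simply connected domain $W\subseteq V$ so that a positive-measure set of $W$-geodesics from $w_0$ terminates on $\partial V$ rather than on an artificial cut inside $V$; this again uses the finite union of Jordan curves hypothesis to provide a genuine arc of $\partial V$ embeddable as part of $\partial W$.
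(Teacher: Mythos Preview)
Your overall strategy---produce two boundary points related by a composition of a local branch of a forward iterate with a local branch of an inverse iterate---is the same as the paper's. But there are genuine gaps in both cases, and both stem from transferring the Jordan-curve hypothesis from $\partial U$ to places where it is not known to hold.

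\textbf{Case 1.} You assert that ``the Jordan-curve hypothesis on $\partial U$ forces $\partial V$ to be locally a Jordan arc at generic points''. This is not true: $V$ is the Fatou component containing $f^k(U)$, and $\partial V$ can be a fractal with no arcs whatsoever---the hypothesis constrains $\partial U$, not $\partial V$. Consequently your construction of $W$ (with an arc of $\partial V$ in $\partial W$) and your final claim that each $\beta_j$ carries ``the arc $\partial V$'' to the local arc of $\partial U$ are both unjustified as written. What \emph{is} true is that $f^k(\partial U)\subset\partial V$ contains Jordan arcs, and the paper exploits exactly this: it first fixes a generic point of $\partial U$, takes $V_0$ a small Jordan neighbourhood there on which $\varphi:=f^k$ is univalent, and sets $W:=\varphi(V_0\cap U)$ and $\alpha:=\varphi(V_0\cap\partial U)$. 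Then $W$ is automatically simply connected with the Jordan arc $\alpha\subset\partial V$ in its boundary, and \emph{only afterwards} is the interior point $w_0\in W$ chosen (via Bolsch's theorem) so that it has a second preimage. Reversing this order, as you do, forces you to manufacture $W$ around a pre-chosen $w_0$, which is where the unjustified claim about $\partial V$ creeps in.

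\textbf{Case 2.} Your map $\sigma=(f^m|_U)^{-1}\circ f^n|_U$ is only defined on all of $U$ if $f^m|_U\colon U\to U_m$ is \emph{surjective}; this is in fact true when $f^m|_U$ is injective, by Bolsch's theorem (degree $1$ means every value is taken), but you do not invoke it. More seriously, your claim that $\sigma$ extends holomorphically across a generic $\zeta_1\in\partial U$ after discarding countably many bad points is not justified: the inverse branch $(f^m|_U)^{-1}$ may fail to extend across a point $\xi=f^n(\zeta_1)\in\partial U_m$ even when $\xi$ is a regular value of $f^m$ (the branch could have an asymptotic singularity over $\xi$, or its cluster set could contain several points if $\partial U_m$ is not locally connected). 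One can rescue this via a Carath\'eodory-type argument---$U$ is finitely connected with Jordan boundary, so conformal automorphisms extend to homeomorphisms of $\overline{U}$, and then a local inverse branch of $f^m$ at $\sigma(\zeta_1)$ gives the holomorphic extension---but this is a substantial missing step. The paper sidesteps the entire issue by handling the eventually periodic case with the \emph{same} Gross star argument as the non-injective case, simply replacing $k$ by $\tilde{k}=k+p$; the second boundary point $\zeta_2=\varphi^{-1}(\omega)$ is then produced directly on the original arc $V_0\cap\partial U$, with no separate automorphism construction needed.
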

\begin{proof}
 By assumption, there is $k\in\N$ such that either the Fatou component $U_k$ containing $f^k(U)$ is periodic (that is, there is $p\in\N$ such that $f^p(U_k)\subseteq U_k$),
        or such that $f^k\colon U\to U_k$ is not injective. 
      Choose some point $z_0\in \partial U$ that is not 
      on the grand orbit of $\infty$ or of any critical or periodic point. (The hypothesis implies that $f$ is neither
      constant nor a M\"obius transformation, so the latter set is countable. On the other hand, $\partial U$ is uncountable by hypothesis, so a point $z_0$ with
      the desired properties exists.) 
      In particular, $(f^k)'(z_0)\neq 0$. 
      Choose a small Jordan neighbourhood $V_0$ of 
      $z_0$ whose boundary intersects $\partial U$ in exactly two points and on
      which $\phi\defeq f^k|_{V_0}$ is injective.
      Then $V_0\cap \partial U$ is an arc, as is $\alpha \defeq f^k(V_0\cap \partial U)\subset \partial U_k$.
        (Note that we do not claim that $U_k$ is necessarily a Jordan 
      domain, nor that $f^k(V_0\cap \partial U) = f^k(V_0)\cap \partial U_k$.) 
      Recall that $f^k(z_0)$ is not periodic by choice of $z_0$.
      Hence, if $U_k$ is periodic of period $p$, then by shriking $V_0$ if necessary 
      we may also assume that
       $f^p(\alpha)\cap \alpha = \emptyset$.
      
  Every point
       $w \in W\defeq f^k(V_0\cap U)\subset U_k$ has 
       exactly one preimage in $V_0$ under $f^{k}$, namely 
       $\phi^{-1}(w)$. Set $\tilde{k}\defeq k$ if $f^k\colon U\to U_k$ is
          not univalent, and $\tilde{k}\defeq k+p$ otherwise, where $p$ is the
          period of the Fatou component $U_k$. 
          We claim that there is $w_0\in W$ 
          that has a simple preimage $z_0\in U\setminus\{\phi^{-1}(w_0)\}$ 
          under $f^{\tilde{k}}\colon U\to U_k$.

     We use a theorem of Bolsch~\cite[Theorem 1]{bolsch}: either there is a finite $d\geq 1$ such that 
       $f^{\tilde{k}}\colon U\to U_k$ takes every value in $U_k$ exactly $d$
       times (counting multiplicity), or $f^{\tilde{k}}\colon U\to U_k$ 
       takes every value in $U_k$ 
       infinitely often, with at most two exceptions, in which case we set $d=\infty$. 
       Let $w_0 \in W$ be a point that is not on the   
       grand orbit of a critical 
       or periodic point, and not an exceptional point for
       Bolsch's theorem (in the case where $d=\infty$). 
       Then $f^{\tilde{k}}|_U$ takes the value $w_0$ exactly $d$ times. 
       (Since $w_0$ is not on a critical orbit, it has no
       multiple preimages under $f^{\tilde{k}}$.)

       If $d > 1$, then we may pick $z_0\in U\cap f^{-\tilde{k}}(w_0)$ with
       $z_0\neq \phi^{-1}(w_0)$. Otherwise, let $z_0$ be
       the unique element of $f^{-\tilde{k}}(w_0)$. 
     Let $\tilde{\phi}$ be a univalent restriction of $f^{\tilde{k}}$ near
       $z_0$. By Theorem~\ref{thm:gross}, there is a hyperbolic geodesic
       $\gamma$ of $W$, starting at $w_0$
       and ending at a point $\omega\in\alpha$, along which the branch $\beta = \tilde{\phi}^{-1}$ can
       be continued analytically.

    Let $\zeta_1 = \beta(\omega)\in \partial U$ be the point
     obtained by analytic continuation of $\beta$ along $\gamma$,
     and set $\zeta_2 \defeq \phi^{-1}(\omega)$. 
     If $k=\tilde{k}$, then $d>1$ and hence $z_0\neq \phi^{-1}(w_0)$. Since
     $\gamma\subset \phi(V_0)$, it follows that $\zeta_1\neq \zeta_2$. 
     Otherwise, $f^{k+p}(\zeta_1) = \omega \in \alpha$ and $f^{k+p}(\zeta_2) = f^p(\omega) \in f^p(\alpha)$.
     Since we chose $\alpha$ small enough to ensure that
     $f^p(\alpha)\cap \alpha = \emptyset$, also $\zeta_1\neq \zeta_2$ in this case. 

   The map $\psi\defeq \phi^{-1}\circ f^{\tilde{k}}$ is univalent on a neighbourhood $V_1$ of $\zeta_1$
     and maps $\zeta_1$ to $\zeta_2$. We may assume that $V_1$ is chosen such that $V_1\cap \partial U$ is  an arc; then $\psi(V_1\cap \partial U)$ is also an arc, and therefore
     a relative neighbourhood of $\zeta_2$ in $\partial U$. Shrinking $V_1$ further, if necessary, we can ensure that $\psi(V_1\cap \partial U) =  V_2\cap \partial U$, where
     $V_2 = \psi(V_1)$. The proof is complete.
\end{proof}

To conclude that there exist domains that cannot be realised as eventually periodic Fatou components, or as multiply connected Fatou components 
  of meromorphic functions with only finitely many poles, we use the following observation.
  
\begin{prop}[Domains with everywhere inhomogeneous boundaries]\label{prop:weirddomains}
  For every $k\geq 1$, there exists a bounded domain $U\subset\C$, bounded by
     $k$ disjoint Jordan curves, such that 
    $\partial U$ is not locally biholomorphic near any two distinct points $\zeta_1,\zeta_2\in \partial U$. 
    That is, there does not exist a biholomorphic map $\psi\colon V_1\to V_2$ between any neighbourhoods of $V_1,V_2$ of
      $\zeta_1$ and $\zeta_2$, respectively, such that $\psi(V_1\cap \partial U) = V_2\cap \partial U$ and $\psi(\zeta_1) = \zeta_2$.
\end{prop}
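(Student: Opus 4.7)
The plan is to build the boundary of $U$ as a disjoint union of $k$ Jordan curves, each obtained from a round circle by attaching, at a dense set of parameter values, a small \emph{double corner} whose prescribed pair of interior angles serves as a local conformal invariant distinguishing every boundary point. Since a biholomorphism preserves angles between smooth curves, the interior angle of a corner of $\partial U$ is itself an invariant; for a double corner---two corners $c_1,c_2$ joined by a short smooth sub-arc of $\partial U$, with prescribed angles $\alpha_1<\alpha_2$ in $(0,\pi)$---the unordered pair $\{\alpha_1,\alpha_2\}$ is therefore a local conformal invariant taking values in the open triangle $T=\{(x,y)\in(0,\pi)^2:x<y\}$. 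Because $S^1$ embeds into $T$, for each $i\in\{1,\ldots,k\}$ I shall fix a continuous injection $\alpha^{(i)}\colon S^1\to T$ with the images $\alpha^{(i)}(S^1)$ pairwise disjoint in $T$.

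The curves will be produced by inductive modification. Starting with $k$ pairwise disjoint round circles $C_1,\ldots,C_k$ bounding a domain of connectivity $k$, I parametrise each by $S^1$, fix a countable dense set $\{t_1,t_2,\ldots\}\subset S^1$, and at step $n$ replace a short arc of the current $C_i$ near parameter $t_n$ by a polygonal bump with exactly two interior corners of angles $\alpha^{(i)}(t_n)$, meeting the rest of the curve tangentially so that no additional corners are created. Taking the diameter of the $n$th modification to be summably small and much smaller than the distance to every previously placed corner, the limits $\gamma_1,\ldots,\gamma_k$ are pairwise disjoint Jordan curves carrying a dense set of double corners, with parametrisations extending to homeomorphisms $S^1\to\gamma_i$. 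I let $U$ be the domain bounded by them.

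For the verification, suppose $\psi\colon V_1\to V_2$ is a biholomorphism with $\psi(V_1\cap\partial U)=V_2\cap\partial U$ and $\psi(\zeta_1)=\zeta_2$ for some $\zeta_1\neq\zeta_2$. Conformality sends each corner in $V_1$ to a corner in $V_2$ with the same interior angle, and the scale hierarchy built into the construction makes ``$c,c'$ belong to a common double corner'' locally detectable as ``$c,c'$ are joined by a smooth sub-arc of $\partial U$ of length much smaller than the distance to any other corner''. Since $\psi$ has bounded distortion at each point, this metric relation is preserved; hence $\psi$ maps double corners in $V_1$ to double corners in $V_2$ with the same unordered angle pair. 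Writing $\zeta_j\in\gamma_{i_j}$ with parameter $t_j^0$, the angle pairs of double corners lying in $V_j$ accumulate only at $\alpha^{(i_j)}(t_j^0)$ as $V_j$ shrinks to $\zeta_j$. Equating the two limits and invoking pairwise disjointness of the images $\alpha^{(i)}(S^1)$ forces $i_1=i_2$, after which injectivity of $\alpha^{(i_1)}$ forces $t_1^0=t_2^0$, contradicting $\zeta_1\neq\zeta_2$.

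The main technical obstacle will be in the second paragraph: choosing the scales carefully enough that the limit curves are honest Jordan curves with homeomorphic parametrisations, and that the ``same double corner'' relation is intrinsic to the local metric geometry and therefore preserved by the bounded-distortion map $\psi$. This is a careful bookkeeping exercise rather than a conceptual difficulty; once it is set up, everything else follows from the conformal invariance of interior corner angles.
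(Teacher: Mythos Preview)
Your approach is sound and takes a genuinely different route from the paper's. The paper builds each boundary curve as a generalised von Koch curve in which the similarity ratio varies continuously along the curve, so that the \emph{local Hausdorff dimension} of $\partial U$ is different at every point; since a biholomorphism is locally bi-Lipschitz and therefore preserves Hausdorff dimension, no two boundary points can be matched. Your invariant is instead the pair of interior angles at a ``double corner'', preserved because conformal maps preserve angles. The paper's curves are nowhere differentiable (and can be taken to be quasicircles), while yours contain smooth sub-arcs together with a countable dense set of corners; the two constructions illustrate complementary mechanisms for producing local conformal rigidity.

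Two points should be tightened. First, a \emph{polygonal} bump meeting the ambient curve tangentially and having exactly two corners is over-determined: with three straight segments the two turning angles are forced to be (approximately) equal, so an arbitrary pair $(\alpha_1,\alpha_2)\in T$ cannot be realised this way. Use piecewise-smooth bumps instead; nothing in your argument requires straight pieces. Second, your metric detection of the relation ``$c,c'$ belong to the same double corner''---that the smooth sub-arc joining them is much shorter than the distance to every other corner---will not in general survive the construction: by density, bumps inserted at later stages can (and will) sit at Euclidean distance to an earlier double corner that is small compared with that earlier bump's own diameter, so the scale separation you invoke fails in that direction. The clean replacement is the purely topological characterisation: $c$ and $c'$ belong to the same double corner if and only if one of the two sub-arcs of $\partial U$ between them contains no other corner. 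This holds provided you never insert a later bump \emph{inside} an earlier one (which is compatible with density, since the bumps can be chosen with summable total parameter length), and it is automatically preserved by $\psi$, because $\psi$ restricts to a homeomorphism of $\partial U\cap V_1$ onto $\partial U\cap V_2$ taking corners to corners and smooth points to smooth points. With these two adjustments your argument goes through.
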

\begin{proof}
Such examples can be constructed by many methods. Since we are not aware of a reference, we outline one such construction:
  take boundary curves $\gamma_1,\dots,\gamma_k$ 
    with the property 
     that the local Hausdorff dimension of $\partial U = \bigcup_{j=1}^k \gamma_k$ is different at any two distinct points of
     $\partial U$. (Here by the local Hausdorff dimension at a point $z$ we mean the infimum of the 
     Hausdorff dimension over all relative neighbourhoods of $z$ in $\partial U$.)
     Since  Hausdorff dimension is preserved by bi-Lipschitz maps, it follows
     that $U$ has the desired property. 
     
    To show that such curves exist, consider 
 a generalisation of the standard
  von Koch curve. Recall that the von Koch curve is obtained as a limit of curves $\gamma_n\subset\C$, with $\gamma_0 = [0,1]$, and 
  $\gamma_n$ a polygonal arc consisting of $4^n$ line segments. To obtain $\gamma_{n+1}$ from $\gamma_n$, subdivide each arc of
  $\gamma_{n}$ into three equal parts, and replace each middle interval by two arcs 
   that form an equilateral triangle
  with the deleted arc. (See e.g.~\cite[Figure~0.2(a)]{falconerfractalgeometry}.)
  
 In other words, the von Koch curve is the limit set of an iterated function system consisting of four similarities with contraction factor of $1/3$, leading to 
  a curve of Hausdorff dimension $\log 4/\log 3$. This construction may be generalised by replacing each interval instead by four intervals of length
    $1/s$, where $2 < s \leq 4$. All of these systems satisfy the open set condition for 
    the right-angled isosceles triangle $T$ with vertices $0$, $1$ and $1/2 + 1/2 i$. 
    By the usual formula for the Hausdorff dimension of self-similar fractals~\cite[Theorem~9.3]{falconerfractalgeometry}, 
    the resulting limit set~-- which is an arc connecting $0$ to $1$ within $\overline{T}$~-- 
    has Hausdorff dimension $h = \log 4 / \log s$. 
    For $s=4$, this curve coincides with the interval $[0,1]$, while for $s=2$ (which we omitted to avoid self-intersections), 
    the construction leads to a space-filling curve with image $\overline{T}$. 
    
  For any closed interval $[h_1,h_2]\subset [1,2]$, we modify this construction as follows, to obtain an arc along which the local Hausdorff dimension is continuously strictly increasing
    from $h_1$ to $h_2$. Set $s_1 \defeq 4^{1/h_1}$ and
    $s_2 \defeq 4^{1/h_2}$. We 
    follow the construction of the von Koch curve, but at the $n$-th step we replace the $j$-th linear
    segment of $\gamma_n$ ($0\leq j < 4^n$) with
    four segments of length $1/s_{n,j}$ where  
      \[ s_{n,j} = s_1 - \frac{j}{4^n}\cdot (s_1-s_2). \] 
   Parameterise the resulting curves as $\gamma_n\colon [0,1]\to \C$ in such a way that the preimage of each segment of $\gamma_n$ has length $4^{-n}$.
    Then $\gamma_n$ converges uniformly to an arc \mbox{$\gamma\colon [0,1]\to\C$}. Set
       $s(t) \defeq s_1 + t(s_1 - s_2)$; then it follows (by the same argument as for \cite[Theorem~9.3]{falconerfractalgeometry})
        that the Hausdorff dimension of $\gamma( [t_1,t_2])$ is bounded from
       below by $\log 4 / \log s(t_1)$ and from above by $\log 4 / \log s(t_2)$. Hence the local
       Hausdorff dimension of $\gamma([0,1])$ at a point $t$ is given by $\log 4 / \log s(t)$, and the arc $\gamma([0,1])$ has the desired property. 
       
   Now apply this construction to $3k$ pairwise disjoint closed subintervals of $[1,2]$. Using affine transformations of the plane,
     we may arrange these arcs into $k$ ``snowflakes''
     (compare \cite[Figure~0.2(b)]{falconerfractalgeometry}) that
     form the boundary of a domain $U$, as desired. 
\end{proof}
\begin{rmk}
  In our proof of Proposition~\ref{prop:weirddomains}, if none of the subintervals of [1,2] that are used 
     contain the value $h=2$, then the boundary curves constructed are quasicircles. It is plausible 
   that the boundary curves of the
   domain $U$ in Proposition~\ref{prop:weirddomains} can instead be chosen to have any level of smoothness 
   short of being analytic. 
\end{rmk} 

\begin{cor}[Non-realisability by eventually periodic Fatou components]\label{cor:not-realised}
If $U$ is a domain as in Proposition~\ref{prop:weirddomains}, and $U$ is a Fatou component of a meromorphic function~$f$, then
   $f$ is transcendental, $U$ is a wandering domain and $f^n|_U$ is injective for all $n\geq 1$.    
   In particular, if $U$ is multiply connected, then $f$ has infinitely many poles.
\end{cor}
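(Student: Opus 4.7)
The plan is essentially a bookkeeping exercise, combining the two propositions just proved with two classical inputs from the literature.

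First, I would observe that the domain $U$ produced by Proposition~\ref{prop:weirddomains} is bounded by finitely many Jordan curves, so the hypothesis of Proposition~\ref{prop:ev-per} on the topology of $\partial U$ is satisfied. By the defining property of $U$, no two distinct points of $\partial U$ admit locally biholomorphic neighborhoods of the boundary. Taking the contrapositive of Proposition~\ref{prop:ev-per}, neither of the two alternatives in its hypothesis can hold for $U$: the Fatou component $U$ is \emph{not} eventually periodic, and for every $k \geq 1$ the restriction $f^k|_U$ is injective. This gives the middle two conclusions of the corollary.

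Next, I would appeal to Sullivan's non-wandering theorem~\cite{sullivan85}: polynomials and rational maps have no wandering Fatou components. Since $U$ is wandering, $f$ cannot be rational, so $f$ is transcendental.

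For the final claim, suppose for contradiction that $U$ is multiply connected and $f$ has only finitely many poles. Then, as recalled in the introduction, the results of Baker~\cite{baker84} together with their extension by Rippon and Stallard~\cite{rippon-stallard08} imply that any such multiply connected Fatou component is a bounded wandering domain and, for all sufficiently large $n$, the set $f^n(U)$ separates $f^{n-1}(U)$ from $\infty$. This forces $f^n|_U$ to be non-injective for large $n$, directly contradicting the injectivity already established. Hence $f$ must have infinitely many poles.

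I do not foresee any serious obstacle; the only point that requires a moment of care is verifying that the topological hypothesis of Proposition~\ref{prop:ev-per}, namely that $\partial U$ be a finite union of Jordan curves, matches the output of Proposition~\ref{prop:weirddomains}, which it does by construction. The rest is a direct chain of contrapositives against the three known structural results (Propositions~\ref{prop:ev-per}, Sullivan, and Baker--Rippon--Stallard).
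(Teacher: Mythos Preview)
Your proposal is correct and follows essentially the same route as the paper: contrapositive of Proposition~\ref{prop:ev-per}, then Sullivan, then Rippon--Stallard to derive non-injectivity and a contradiction. The only point where the paper is slightly more careful is the invocation of~\cite{rippon-stallard08}: for meromorphic functions with finitely many poles the result is a \emph{dichotomy}~--- either the Fatou component $U_n$ containing $f^n(U)$ is simply connected for all large $n$, or $U_n$ is multiply connected and surrounds $U_{n-1}$ for all large $n$~--- rather than the unconditional ``$f^n(U)$ separates $f^{n-1}(U)$ from $\infty$'' that you state (that formulation is Baker's result for entire functions). Both branches of the dichotomy force $f^n|_U$ to be non-injective (the first because a univalent image of a multiply connected domain is multiply connected), so your conclusion stands, but you should phrase the appeal to~\cite{rippon-stallard08} accordingly.
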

\begin{proof}
By Proposition~\ref{prop:ev-per}, $U$ is a wandering domain and $f^n|_U$ is injective for all $n\geq 1$; by Sullivan's theorem, it follows that $f$ is transcendental.

   If $f$ has finitely many poles and $U$ is a multiply connected Fatou component, then by~\cite[Theorem~1(a)]{rippon-stallard08} either the Fatou component 
      $U_k$ containing $f^n(U)$ is simply connected for all sufficiently large $n$, or $U_n$ is multiply connected and surrounds $U_{n-1}$ for all sufficiently large 
       $n$. In either case, $f^n|_U$ cannot be injective for large $n$, and the final claim of the corollary follows from the first.
\end{proof}

\section{Approximation}
 To construct the function in Theorem~\ref{thm:main}, we will use the following classical result of approximation theory (see \cite[Theorem~2~on~p.~94]{gaier87}). Note that in    
   \cite[Section~3]{martipete-rempe-waterman22}, we use a 
   version of Runge's theorem where $\C\setminus A$ is connected and the approximation can be carried out by polynomials, 
   while here the approximation is by rational maps and $A$ can be any compact set. 
   (The theorem for polynomials follows from that for rational maps by moving the poles; we refer to~\cite{gaier87} for details.) 

\begin{thm}[Runge's theorem]
 \label{thm:Runge}
  Let $A\subseteq \C$ be a compact set. 
 Suppose that $g$ is meromorphic on an open neighbourhood of $A$. 
  Then for every $\eps>0$, there exists a rational function~$f$ such that 
\[
\lvert f(z) - g(z)\rvert < \eps\quad \text{for all } z\in A\setminus g^{-1}(\infty).
\] 
\end{thm}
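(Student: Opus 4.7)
The plan is to follow the classical approach to Runge's theorem via a Cauchy--type integral representation plus discretisation. Throughout, let $\Omega \supseteq A$ be an open neighbourhood on which $g$ is meromorphic, and let $P \defeq g^{-1}(\infty) \cap \Omega$ denote the (discrete) pole set of $g$ inside $\Omega$. Since $A$ is compact, only finitely many poles $p_1,\dots, p_m\in P$ lie in any fixed compact subset of $\Omega$, and we may shrink $\Omega$ so that $P$ is finite.

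First, I would construct a cycle $\Gamma$ in $\Omega\setminus (A\cup P)$ with winding number $1$ around every point of $A$ and winding number $0$ around every point of $\C\setminus\Omega$. The standard way is to tile the plane by a grid of axis-parallel squares of side length $\delta$ chosen small enough that every square meeting $A$ is contained in $\Omega\setminus P$, then let $\Gamma$ be the formal sum of those oriented edges of such squares that are \emph{not} shared with another square meeting $A$. A direct check of the winding number at points of $A$ (interior edges cancel in pairs) and at points outside $\Omega$ (no enclosing square at all) gives the required properties, while $\Gamma\cap A=\emptyset$ by construction.

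Next, I would apply the residue theorem to the meromorphic function $\zeta\mapsto g(\zeta)/(\zeta-z)$ for any fixed $z\in A\setminus P$. Since this function has a simple pole at $\zeta=z$ (with residue $g(z)$) and possibly poles at $p_1,\dots, p_m$, I obtain
\[
g(z) \;=\; \frac{1}{2\pi i}\int_\Gamma \frac{g(\zeta)}{\zeta-z}\,d\zeta \;-\; R(z),
\]
where $R(z) \defeq \sum_{j=1}^m n(\Gamma,p_j)\cdot \mathrm{Res}_{\zeta=p_j}\bigl(g(\zeta)/(\zeta-z)\bigr)$ is an explicit rational function of $z$ whose poles lie only at those $p_j$ enclosed by $\Gamma$.

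Finally, I would approximate the contour integral by a Riemann sum. Since $\Gamma$ and $A$ are disjoint compact sets, the integrand is continuous in $(\zeta,z)\in\Gamma\times A$, hence uniformly continuous; partitioning $\Gamma$ finely enough yields a Riemann sum $S(z) = \sum_k c_k/(\zeta_k - z)$ with $\zeta_k\in\Gamma$ approximating the integral uniformly on $A$ within error $\eps$. Then $f(z) \defeq S(z) - R(z)$ is a rational function with $|f(z)-g(z)|<\eps$ for every $z\in A\setminus g^{-1}(\infty)$, as required. The main technical obstacle is the topological construction of the cycle $\Gamma$ with the correct winding numbers; everything else is a straightforward combination of the residue theorem and uniform continuity. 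I would also remark that one could alternatively invoke Runge for polynomial approximation of $1/(\zeta-z)$ on $A$ and then push poles around the complementary components, but this is not needed here because the statement imposes no constraint on the location of the poles of $f$.
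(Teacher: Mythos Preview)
Your proof is essentially correct, but there is one small slip: you require that every grid square meeting $A$ be contained in $\Omega\setminus P$, which is impossible whenever $A$ itself contains a pole of $g$. What you actually need is that these squares lie in $\Omega$ and that the cycle $\Gamma$ avoids $P$; the latter can be arranged by first deleting from $\Omega$ the finitely many poles outside $A$ (so that $P\subset A$), after which $\Gamma\cap A=\emptyset$ gives $\Gamma\cap P=\emptyset$. With this adjustment the argument goes through.

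The paper takes a different and much shorter route: it does not prove Runge's theorem from scratch but cites it as a classical result. The only argument the paper supplies is the remark reducing the meromorphic case to the standard holomorphic case, by subtracting from $g$ a rational function $R$ carrying the principal parts at the finitely many poles in $A$, applying the classical theorem to the holomorphic $\tilde g = g - R$, and then adding $R$ back. Your direct proof handles the poles instead through the residue term $R(z)$ in the Cauchy formula; a short computation shows that your $R(z)$ is precisely (minus) the sum of the principal parts of $g$ at the poles enclosed by $\Gamma$, so the two approaches are carrying out the same subtraction, yours implicitly within the contour-integral framework and the paper's explicitly before invoking the classical statement. Your version is self-contained, while the paper's is modular and avoids reproving a textbook result.
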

\begin{rmk}
  The theorem is often stated only in the case where $g$ has no poles on~$A$, and hence analytic on an 
    open neighbourhood of $A$. This implies the stated result. Indeed, $g$ has only finitely many poles on $A$, and hence
    we may find a rational function $R$ such that $\tilde{g}\defeq g-R$ is analytic on a neighbourhood of $A$. 
    Apply the aforementioned special case to $\tilde{g}$ to obtain a rational function $\tilde{f}$ whose poles are
    outside $A$; then $f\defeq \tilde{f}+R$ is the desired approximation of $g$. 
\end{rmk}

We also require the following elementary result on approximation, which is a special case 
  of~\cite[Corollary~2.7]{martipete-rempe-waterman22}. 
\begin{lem}[Approximation of univalent iterates]\label{lem:approx}
   Let $U\subseteq\C$ be open, and $g\colon U\to\C$ be holomorphic. Suppose that $G\subset U$ is open and such that
   $g^n$ is defined and univalent on $G$ for some $n\geq 1$.
   
   Then for every compact 
       $K\subset G$ and every $\eps>0$, there is a $\delta>0$ with the following property.
       For every holomorphic function $f\colon U\to\C$ with 
\[
       \lvert f(z) - g(z)\rvert \leq \delta \quad \text{for all } z\in U,
       \] 
       the function $f^n$ is defined and injective on $K$, with 
\[\lvert f^k(z) - g^k(z)\rvert \leq \eps\quad \text{for all } z\in K \text{ and } 1\leq k \leq n. 
       \] 
 \end{lem}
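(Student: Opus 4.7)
The plan is to prove this by induction on $n$, with the base case $n=1$ handled via compactness and univalence of $g$, and the inductive step reducing to the base case applied at the $(n-1)$-st iterate.

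For the base case, given $K \subset G$ compact with $g$ univalent on $G$, I would choose a compact intermediate set $K \subset \operatorname{int} K'' \subset K'' \subset G$. Setting $\delta \leq \eps$ immediately gives the closeness conclusion. For injectivity, the key point is that because $g$ is univalent on $G$ and $K''$ is compact, $g^{-1}$ is uniformly continuous on $g(K'')$; so there exists a modulus of continuity $\omega$ with $\omega(t)\to 0$ as $t\to 0$ such that $|g(z_1)-g(z_2)| \geq \omega^{-1}(|z_1-z_2|)$ for all $z_1,z_2 \in K$. If $f(z_1)=f(z_2)$ with $z_1,z_2\in K$, then $|g(z_1)-g(z_2)|\leq 2\delta$, forcing $|z_1-z_2|$ to be small. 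Combined with the fact that $g'$ is bounded away from zero on $K$ (by univalence and compactness) and Cauchy's estimates for $f'$ versus $g'$ on the slightly larger set $K''$, this yields $f(z_1)\neq f(z_2)$ once $\delta$ is small enough.

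For the inductive step, assume the result for $n-1$. Since $g^n = g \circ g^{n-1}$ is univalent on $G$, both $g^{n-1}$ and $g$ (restricted to the open set $g^{n-1}(G) \subset U$) are univalent. Pick a compact $K \subset \operatorname{int} K' \subset K' \subset G$, and set $L \defeq g^{n-1}(K)$, $L' \defeq g^{n-1}(K')$, so that $L \subset \operatorname{int} L' \subset L' \subset g^{n-1}(G)$. Apply the base case to $g$ on $L'$ (inside the open neighborhood $g^{n-1}(G)$ of $L'$ on which $g$ is univalent) to obtain $\delta_1 > 0$ such that $|f-g|\leq \delta_1$ on $U$ implies $f$ is injective on $L'$ with $|f-g|\leq \eps/2$ on $L'$. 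Next, using uniform continuity of $g$ on $L'$, choose $\eps_0 > 0$ small enough that any $\eps_0$-perturbation of a point in $L$ lies in $\operatorname{int} L'$ and its image under $g$ is within $\eps/2$ of the corresponding image in $g(L)$. Then apply the inductive hypothesis to $K$ at level $n-1$ with tolerance $\min(\eps_0,\eps)$ to obtain $\delta_0 > 0$.

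Setting $\delta \defeq \min(\delta_0, \delta_1)$, any $f$ with $|f-g|\leq \delta$ on $U$ satisfies: by induction, $f^{n-1}$ is defined and injective on $K$ with $|f^k - g^k|\leq \eps$ on $K$ for $1\leq k \leq n-1$, and $f^{n-1}(K) \subset \operatorname{int} L'$; hence $f^n = f \circ f^{n-1}$ is defined on $K$, injective there (since $f^{n-1}$ is injective on $K$ and $f$ is injective on the set $L' \supset f^{n-1}(K)$), and the triangle-inequality estimate $|f^n(z) - g^n(z)| \leq |f(f^{n-1}(z)) - g(f^{n-1}(z))| + |g(f^{n-1}(z)) - g(g^{n-1}(z))|$ gives $|f^n - g^n|\leq \eps$ on $K$ by the choices above. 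I expect the only delicate step is bookkeeping: arranging the successive tolerances $\eps_0, \eps_1$ and the intermediate compact sets $K'$, $L'$ so that $f^{n-1}(K)$ is trapped in the region where the base case applies to $f$ and where $g$ is sufficiently uniformly continuous — a standard but easy-to-botch chain of epsilon-choices.
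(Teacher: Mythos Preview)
Your inductive argument is correct. The paper itself does not give a proof of this lemma at all: it simply records the statement as a special case of \cite[Corollary~2.7]{martipete-rempe-waterman22} and moves on. So there is no ``paper's own proof'' to compare with, and your self-contained argument is a genuine addition rather than a reproduction.

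A couple of minor comments on the write-up. In the base case, your two-scale injectivity argument (uniform continuity of $g^{-1}$ to force $z_1,z_2$ close, then local injectivity of $f$ via Cauchy estimates on $f'-g'$) is fine, but the quickest route is a Hurwitz/Rouch\'e contradiction: if every $\delta>0$ failed, one would have $f_j\to g$ locally uniformly with $f_j(z_j)=f_j(w_j)$, $z_j\neq w_j$ in $K$; compactness and univalence of $g$ force $z_j,w_j\to z$, and then Hurwitz applied to $\zeta\mapsto f_j(\zeta)-f_j(z_j)$ versus $\zeta\mapsto g(\zeta)-g(z)$ on a small disc gives a contradiction. This avoids the somewhat delicate step of extracting a \emph{uniform} injectivity radius for $f$ from $|f'|\geq c/2$ alone. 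In the inductive step your chain of choices is correct; in particular you implicitly use that $g^n$ univalent on $G$ forces both $g^{n-1}$ univalent on $G$ and $g$ univalent on $g^{n-1}(G)\subset U$, which is exactly what is needed to invoke the base case on $L'$ and the inductive hypothesis on $K$.
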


\section{Meromorphic functions with wandering compacta}

Let $K\subseteq \C$ be a compact set. By applying an affine transformation, we may assume without
  loss of generality that $K\subseteq \DD$. 
  For $j\geq 0$, define 
\[
K_j:=\bigl\{z\in\C \colon \textup{dist}(z,K)\leq 2^{-(j+1)}\cdot \dist(K,\partial\DD) \bigr\}. 
\]

  Then $(K_j)_{j=0}^{\infty}$ is a sequence of compact sets with the following properties: 
   \begin{enumerate}[(a)]
     \item $K_{0} \subseteq \DD$.
     \item $K_j\subseteq \interior(K_{j-1})$ for all $j\geq 1$.
     \item $\bigcap_{j=0}^{\infty} K_j = K$.
   \end{enumerate}
         
 \begin{figure}
\begin{center}
\vspace{5pt}
\includegraphics[height=0.3\linewidth]{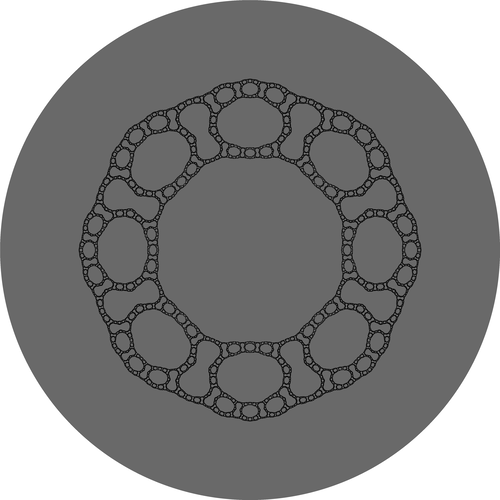}\quad
\includegraphics[height=0.3\linewidth]{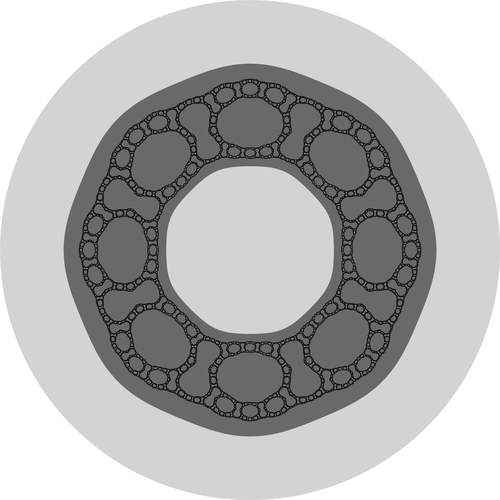}\quad
\includegraphics[height=0.3\linewidth]{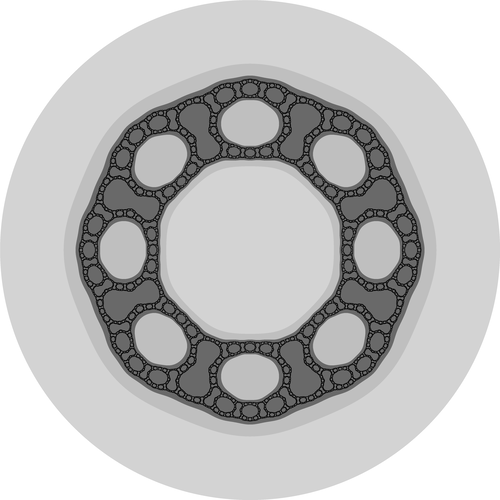}\vspace{10pt}\\
\includegraphics[height=0.3\linewidth]{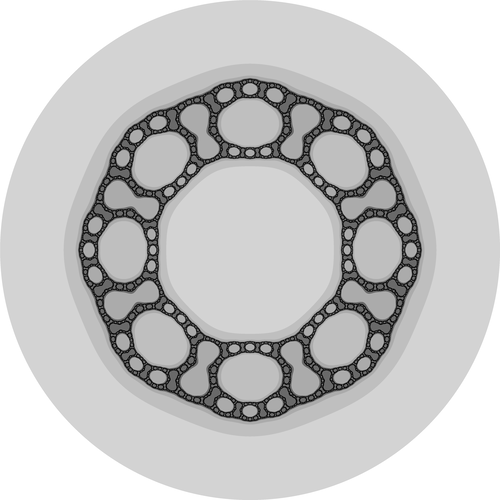}\quad
\includegraphics[height=0.3\linewidth]{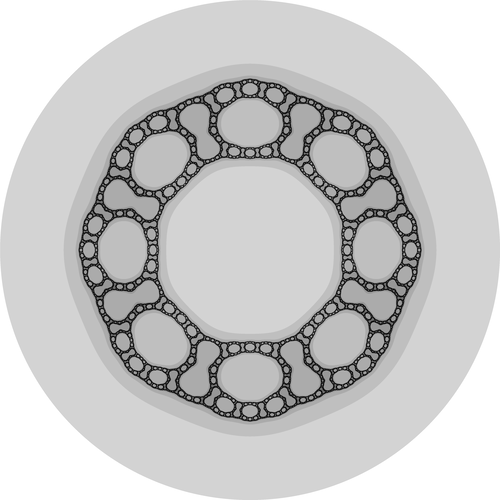}\vspace{10pt}
\put (-292,160) {$K_0$}
\put (-145,160) {$K_1$}
\put (1,160) {$K_2$}
\put (-217,-20) {$K_3$}
\put (-73,-20) {$K_4$}
\end{center}
\caption{The first sets in a nested sequence of compacta $(K_j)$ shrinking down to the Julia set of the rational map $f(z)=z^4-0.1/z^4$, which is a Sierpi\'nski curve; see \cite[Figure~2]{devaney13}. (Compare to \cite[Figure~3]{martipete-rempe-waterman22}.)}
\label{fig:devaney}
\end{figure}

For $j\geq -1$, we define the discs
    \[ D_j \defeq D(3j,1)=\bigl\{z\in\C\colon |z-3j|<1\bigr\}. \]
    Our goal is to construct a meromorphic function $f$ for which $f^j(K)$ is in $D_j$ for all $j\geq 0$, while the boundary
     of every $K_j$ is eventually mapped to an attracting basin containing the disc $D_{-1}$. 
     
     \begin{prop}\label{prop:main}
       There is a transcendental meromorphic function $f$ with the following properties:
        \begin{enumerate}[(a)]
          \item $f(\overline{D_{-1}})\subseteq D_{-1}$.
          \item $f^j$ is injective on $K_j$ for all $j\geq 0$, with
               $f^j(K_{j})\subseteq D_{j}$.
          \item $f^{j+1}(\partial K_{j})\subseteq D_{-1}$ for all $j\geq 0$.
        \end{enumerate}
     \end{prop}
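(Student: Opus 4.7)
The plan is to construct $f$ as the limit of a sequence of rational functions $(f_n)_{n \geq 0}$, each satisfying (a)--(c) truncated to the first $n$ levels: $f_n^k$ is injective on $K_k$ with $f_n^k(K_k) \subseteq D_k$ for $0 \leq k \leq n$, $f_n^{k+1}(\partial K_k) \subseteq D_{-1}$ for $0 \leq k \leq n-1$, and $f_n(\overline{D_{-1}}) \subseteq D_{-1}$. If $f_{n+1}$ is kept sufficiently close to $f_n$ on an enlarging family of compact sets, the limit $f$ will inherit (a)--(c) in full. The base case can be taken as $f_0 \equiv -3$, which satisfies the truncated conditions at $n = 0$ trivially.

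For the inductive step I invoke the meromorphic Runge theorem (Theorem~\ref{thm:Runge}). The key observation is that $f_n^n$ is injective on $K_n$ and $K_{n+1} \cap \partial K_n = \emptyset$, so $f_n^n(K_{n+1})$ and $f_n^n(\partial K_n)$ are disjoint compact subsets of $D_n$. I then pick a compact set $A_n$ consisting of three pairwise disjoint pieces: an ``old'' piece $A_n^{\mathrm{old}}$ containing $\overline{D_{-1}}$ together with closed neighborhoods of the iterate images $f_n^k(K_j)$ for $0 \leq j, k \leq n$; a closed neighborhood $L_n^+$ of $f_n^n(K_{n+1})$ inside $D_n$; and a closed neighborhood $L_n^-$ of $f_n^n(\partial K_n)$, also inside $D_n$. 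On disjoint open neighborhoods of these three pieces I define a meromorphic function $g_n$ by $g_n = f_n$ near $A_n^{\mathrm{old}}$, $g_n(w) = w + 3$ (a univalent affine map sending $D_n$ onto $D_{n+1}$) near $L_n^+$, and $g_n \equiv -3 \in D_{-1}$ near $L_n^-$. Theorem~\ref{thm:Runge} then produces a rational $f_{n+1}$ with $|f_{n+1} - g_n| < \epsilon_n$ on $A_n$.

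The tolerance $\epsilon_n$ is chosen, via an $n$-fold application of Lemma~\ref{lem:approx}, to be so small that (i) each $f_{n+1}^k$ for $k \leq n$ stays univalent on a neighborhood of $K_k$ with the truncated containments from step $n$ unchanged; (ii) the new conditions $f_{n+1}^{n+1}(K_{n+1}) \subseteq D_{n+1}$ (injectively) and $f_{n+1}^{n+1}(\partial K_n) \subseteq D_{-1}$ follow by continuity from $g_n(f_n^n(K_{n+1})) \subseteq D_{n+1}$ and $g_n(f_n^n(\partial K_n)) = \{-3\} \subseteq D_{-1}$, respectively; and (iii) $\sum_n \epsilon_n < \infty$, so that $(f_n)$ converges on every compact subset of $\mathbb{C}$ eventually contained in some $A_n$ and disjoint from the limiting poles. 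The limit $f$ is then meromorphic on $\mathbb{C}$ and satisfies (a)--(c) in the limit; it is transcendental because for all large $n$ the function takes values both near $-3$ and near the distant disc $D_{n+1}$ on points of $D_n$, so $\infty$ is an essential singularity rather than a pole.

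The main technical difficulty is the compounding of approximation errors through many iterates: perturbing $f_n$ by $\epsilon_n$ can perturb $f_n^k$ by a much larger amount, so $\epsilon_n$ must be chosen simultaneously small with respect to every level $k \leq n$ via Lemma~\ref{lem:approx} applied at each such level. A subsidiary obstacle is the topological setup of $A_n$: realising $A_n^{\mathrm{old}}$, $L_n^+$, and $L_n^-$ as disjoint components of a bounded open set on which $g_n$ is meromorphic depends on the injectivity of $f_n^n$ on $K_n$ (which separates $L_n^+$ from $L_n^-$) and on the disjointness of $D_n$ from $\overline{D_{-1}}$ and from the $D_k$ with $k \neq n$ (which lets $A_n^{\mathrm{old}}$ stay outside $D_n$).
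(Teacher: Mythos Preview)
Your proposal is correct and follows essentially the same approach as the paper: an inductive Runge construction with base case $f_0\equiv -3$, where at each step the approximating set splits into an ``old'' piece (on which $g$ equals the previous rational map), a piece around $f_n^n(K_{n+1})$ (on which $g(z)=z+3$), and a piece around $f_n^n(\partial K_n)$ (on which $g\equiv -3$), with $\eps_n$ chosen via Lemma~\ref{lem:approx} so that the iterate control survives passage to the limit. The paper streamlines your $A_n^{\mathrm{old}}$ by taking a single large closed disc $\Delta_n=\overline{D(-3,1+3n)}$ that automatically contains $\overline{D_{-1}}$ and all earlier orbit pieces $f_n^k(K_n)$ for $k<n$; this avoids the small indexing slip in your description (as written, your $A_n^{\mathrm{old}}$ would include $f_n^n(K_n)\subset D_n$, contradicting your later claim that it stays outside $D_n$~-- you want only $k<n$ there), and makes the exhaustion $\bigcup A_n=\C$ immediate.
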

           
     \begin{proof}
  We construct $f$ as the limit of a sequence of rational functions
    $(f_j)_{j=0}^{\infty}$,
   which are defined inductively using Runge's theorem. More precisely, 
   for $j\geq 1$, the function $f_j$ approximates a function $g_j$, defined and meromorphic
   on a neighbourhood of a compact subset $A_j\subseteq\C$, 
   up to an error of at most $\eps_j> 0$. The function $g_j$
   in turn is defined in terms of the previous function $f_{j-1}$.

\begin{figure}
\begin{center}
\def\svgwidth{\textwidth}
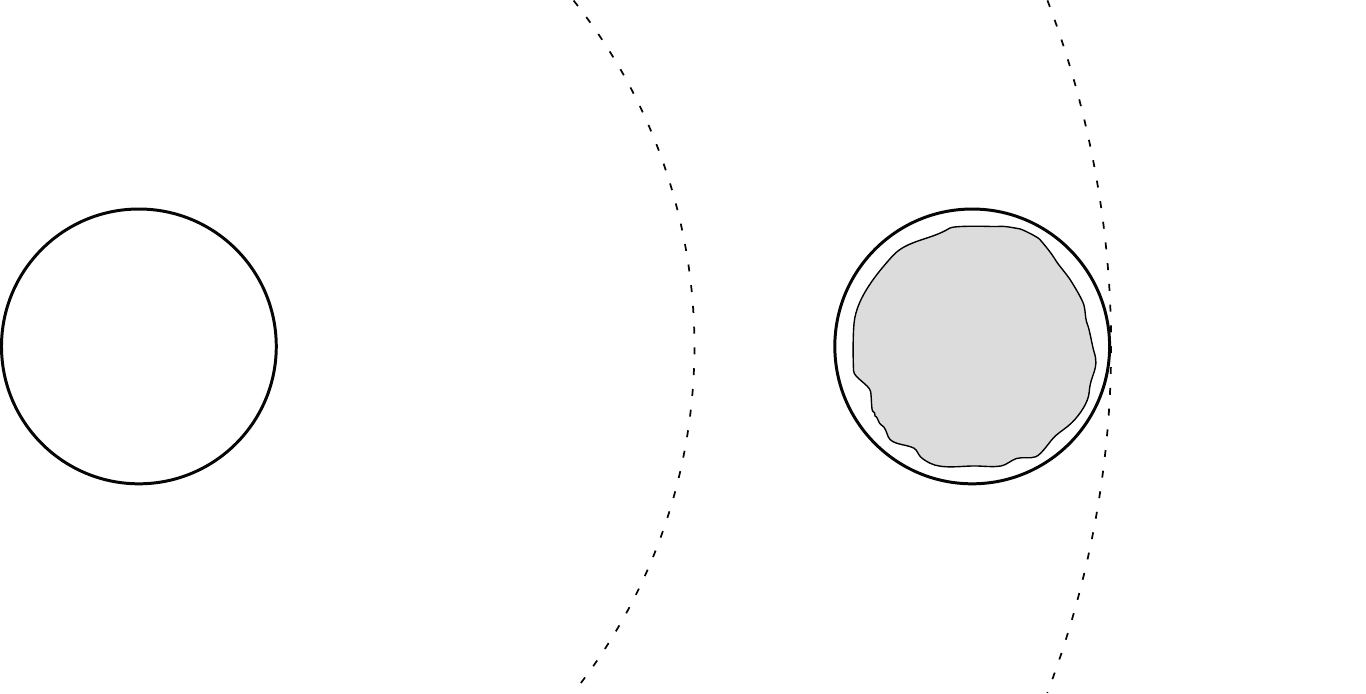
\end{center}
\caption{The construction of $g_{j+1}$ in the proof of Proposition~\ref{prop:main}, for $j=1$.
 The approximation of $g_2$ by a rational function $f_2$ will introduce poles between $f_1(L_1)$ (which is mapped into $D_2$ by $g_2$) and $R_1$ 
  (which is mapped inside $D_{-1}$).}\label{fig:proofsketch}
\end{figure}

    For $j\geq 0$, define 
     \[ \Delta_j       \defeq \overline{D(-3,1+3j)} \supseteq D_{j-1}.\]
     The set $A_j$ will satisfy $A_j\supseteq \Delta_{j-1}$ for $j\geq 1$, so in particular
        \begin{equation}\label{eqn:fillsplane} \bigcup_{j\geq 1} A_j = \bigcup_{j\geq 0} \Delta_j = \C. \end{equation}

    The inductive construction ensures that $f_j$ and $A_j$ have the following properties for $j\geq 0$.
    \begin{enumerate}[(i)]
      \item 
         $f_{j}^j$ is defined, holomorphic and injective on a neighbourhood of $K_j$, and $f_{j}^j(K_j)\subseteq D_j$.\label{item:injectivity}
      \item $f_j^k(K_j)\subseteq \interior(\Delta_j)$ for $0\leq k<j$. \label{item:iterates}
      \item $\eps_1 < 1/2$ and $\eps_{j} \leq \eps_{j-1}/2$ for $j\geq 2$.\label{item:inductiveclose}
    \end{enumerate}
 
  To anchor the induction, we set
     $f_{0}(z) \defeq -3$ for $z\in\C$. Recall that $K_0\subseteq D_0$ by assumption and hence \ref{item:injectivity}  holds for $j=0$. 

   Let $j\geqslant 0$ and suppose that $f_{j}$ has been defined, and that $\eps_{j}$ has been defined if $j\geq 1$. 
     Set $Q_{j} \defeq f_{j}^{j}(\partial K_{j})$. Choose a compact neighbourhood $R_j\subset D_j$ of
        $Q_j$ that is disjoint from $f_j^j(K_{j+1})$; this is possible by~\ref{item:injectivity}. 
        Also choose a compact neighbourhood
         $L_j\subset K_j$ of $K_{j+1}$ such that $f_j^j(L_j)\cap Q_j = \emptyset$. Define the compact set
         \begin{equation}\label{eqn:Aj} A_{j+1} \defeq \Delta_{j} \cup R_{j} \cup f_{j}^{j}(L_{j}). \end{equation}
       Since $D_j \cap \Delta_j = \emptyset$, the three sets in the definition of $A_{j+1}$ are pairwise disjoint. 
      We define
         \[ g_{j+1} \colon A_{j+1} \to \Ch;\quad z\mapsto
             \begin{cases}
                 f_{j}(z), & \text{if } z\in \Delta_{j}, \\
                 -3, &\text{if }z\in R_{j}, \\ 
                 z+3, &\text{if } z\in f^{j}_{j}(L_{j}). \end{cases} \] 
By definition, the function $g_j$ extends meromorphically to a neighbourhood of $A_j$. 
       Observe that $g_{j+1}^j = f_j^j$ on $K_j$ by~\ref{item:iterates}; hence 
        $g_{j+1}^{j+1}$ is defined and injective on $L_{j}$ by~\ref{item:injectivity},
        with $g_{j+1}^{j+1}(L_j) \subset D_{j+1}$, and $g_{j+1}^j(\partial K_j) = f_j^j(\partial K_j)\subset R_j$.
        
         Now choose $\eps_{j+1}>0$ according to~\ref{item:inductiveclose}
       and sufficiently small that  
       any meromorphic function~$f$ that satisfies
         $\lvert f(z) - g_{j+1}(z)\rvert \leq 2\eps_{j+1}$ on $A_{j+1}\setminus g_{j+1}^{-1}(\infty)$ satisfies: 
       \begin{enumerate}[(1)]
          \item $f^{j}(\partial K_{j})\subset R_{j}$.\label{item:pjclose}
          \item $f(R_{j}) \subseteq D_{-1}$.\label{item:Bjclose}
          \item $f^{j+1}$ is defined and injective on a neighbourhood of $K_{j+1}\subseteq \interior(L_{j})$, where it satisfies
             $f^{j+1}(K_{j+1})\subset D_{j+1}$.\label{item:fjinjective}
          \item $\bigcup_{k=0}^j f^{k}(K_{j+1})\subset \interior(\Delta_{j+1})$. 
       \end{enumerate}
       Since $g_{j+1}$ satisfies all of these properties, this is possible by Lemma~\ref{lem:approx}.
       We now let $f_{j+1}\colon \C\to\hat{\C}$ be
       a rational map approximating $g_{j+1}$ up to an error of at most $\eps_{j+1}$, 
       according to Theorem~\ref{thm:Runge}. Then $f_{j+1}$ satisfies the properties \ref{item:injectivity} and \ref{item:iterates}. This completes the inductive 
       construction. 
              
    Note that, for $j\geq k$, $\Delta_k \subset \Delta_j \subset A_{j+1}$ by~\eqref{eqn:Aj}. 
      Condition~\ref{item:inductiveclose} implies that $(f_j)_{j=k+1}^{\infty}$ forms
     a Cauchy sequence on every $A_k$, so by~\eqref{eqn:fillsplane} the functions $f_j$
     converge uniformly in the spherical metric to a meromorphic function $f$. For $1\leq k\leq j$,
     \[
     |f_j(z)-g_k(z)|\leq \eps_k+ \dots + \eps_j \leq 2\eps_k\quad \textup{for all } z\in A_k.
     \] 
     Hence the limit function $f$ satisfies
     $\lvert f(z) - g_k(z)\rvert\leq 2\eps_k$ for all $z\in A_k$ and $k\geq 1$. 

    Since $g_{1}(D_{-1})=f_{0}(D_{-1})=\{-3\}$, and 
       $2\eps_1 < 1$, it follows that
         $f(\overline{D_{-1}})\subseteq D_{-1}$.  
        Moreover, 
    $f^{j+1}(\partial K_{j})\in D_{-1}$ for $j\geq 0$ by~\ref{item:pjclose} and~\ref{item:Bjclose}. Finally, by~\ref{item:fjinjective}, 
    $f^j$ is injective on $K_j$ and $f^j(K_j)\subseteq D_j$.     This completes the proof of Proposition~\ref{prop:main}.
\end{proof}
\begin{rmk}
By the maximum modulus theorem, the approximating function $f_{j+1}$ must have a pole between $f_j^j(L_j)$ and any connected component of $ f_j^j(\partial K_j)$. 
  These poles lie in $D_j\subset \Delta_{j+1}$, and hence will also be poles of $f_k$ for $k>j+1$, and hence of $f$. It is well-known that the approximating function in Runge's theorem (Theorem~\ref{thm:Runge})
  can be chosen to have at most one pole in every complementary component of the approximating set $A$. For the unbounded complementary component of $A_{j+1}$, we may assume this pole
  to be at infinity, so that the limiting function $f$ will not have any other poles.
\end{rmk}

\begin{proof}[Proof of Theorem~\ref{thm:main}] Let $f$ be the meromorphic function from~Proposition~\ref{prop:main}.
   Since $K\subseteq K_j$ for all $j\geq 0$, we have $f^j(K)\subseteq D_{j}$ for all $j\geq 0$. 
      In particular, $f^j\vert_K\to \infty$ uniformly as $j\to\infty$, so $K\subset I(f)$
      and $\interior(K)\subseteq F(f)$ by Montel's theorem. 
      
       On the other hand, we have  $f^k(\partial K_{j-1})\subset D_{-1}\subset F(f)$ for $k\geq j$. In particular, since every point of $\partial K$ is the limit of points of $\partial K_j$, 
         $(f^n)_{n=0}^{\infty}$ is not normal at any point of $\partial K$; so $\partial K \subset J(f)$. Moreover,
  for every $z\in \C\setminus K$, there exists $j\in \mathbb{N}$ such that $K$ and $z$ are in different connected components of $\mathbb{C}\setminus \partial K_j$. Since $\partial K_j\subset \C\setminus (J(f)\cup I(f))$, it follows that every connected component of $K$ is a connected
         component of $I(f)$, and every connected component of $\partial K$ is a connected component of $J(f)$. 
         Furthermore, every connected component of $\interior(K)$ is a Fatou component of $f$, which is a wandering domain
         since $f^n(K)\cap f^m(K)= \emptyset$ for $n\neq m$. 
 \end{proof}

To conclude the paper, we prove Theorems~\ref{thm:comp-fatou} and \ref{thm:comp-julia}. Recall that Theorem~\ref{thm:comp-fatou} states that a bounded domain $U\subseteq \C$ is a Fatou component of a meromorphic function if and only if $U$ is regular, while Theorem~\ref{thm:comp-julia} states that a continuum $X\subseteq \C$ is a Julia component of a meromorphic function if and only if $X$ has empty interior.

\begin{proof}[Proof of Theorem~\ref{thm:comp-fatou}]
By Lemma~\ref{lem:regular-fatou-comp}, a bounded Fatou component $U$ is necessarily regular. Note that since $U$ is bounded, $F(f)\setminus U\neq\emptyset$. In the other direction, if 
 $U$ is a bounded regular domain, then $K=\overline{U}$ is a compact set such that $U$ is a connected component of
  $\textup{int}(K)$. Thus, it follows from Theorem~\ref{thm:main} that there is a transcendental mero\-morphic function $f$ such that $U$ is a wandering domain of~$f$.
\end{proof}

\begin{proof}[Proof of Theorem~\ref{thm:comp-julia}]
 By the blowing-up property of the Julia set, if $J(f)\neq\C$, then every connected component of $J(f)$ has empty interior.
  Conversely, suppose that $X\subset\C$ is a continuum with $\interior(X)=\emptyset$. Then we can apply Theorem~\ref{thm:main} with
   $K=X$; for the resulting function $f$, the set $X = \partial K$ is a component of $J(f)$, as claimed. 
\end{proof}

\section*{Conflict of interest statements}
On behalf of all authors, the corresponding author states that there is no conflict of interest.

\section*{Data availability}
Data sharing is not applicable to this article as no datasets were generated or analysed during the current study.

\bibliographystyle{amsalpha}

\bibliography{bibliography}

\end{document}